\newif\ifcolorcomments
\newcommand{\allowcomments}[4]{
\newcommand{#1}[1]{\ifdraft{\ifcolorcomments{\textcolor{#4}{##1 --#3}}\else{\textsl{ ##1 \ --#3}}\fi}\else{}\fi}
}
\newtheorem{theorem}{Theorem}[section]
\newtheorem{lemma}[theorem]{Lemma}
\theoremstyle{definition}
\newcommand{\calA}{\mathcal{A}}
\newcommand{\bfA}{\mathbf A}
\newcommand{\E}{\mathbb E}
\newcommand{\N}{\mathbb N}% by default, $\N$ is American naturals $\{1,2,\ldots\}$
\newcommand{\R}{\mathbb R}
\newcommand{\bfa}{\mathbf{a}}
\newcommand{\bfu}{\mathbf{u}}
\newcommand{\bff}{\mathbf{f}}
\newcommand{\bfl}{\mathbf{l}}
\newcommand{\veca}{\overline{a}}
\newcommand{\vecb}{\overline{b}}
\newcommand{\vecc}{\overline{c}}
\newcommand{\vecd}{\overline{d}}
\DeclareMathOperator{\Des}{Kid}
\renewcommand{\text}{\textup}
\newcommand{\NPC}[1]{\ignorespaces}
\newif\ifdraft\drafttrue
\def\N{\mathbb N}
\def\R{\mathbb R}
\newcommand\hdim{\dim_{\mathrm H}}
\let\origemptyset\emptyset
\renewcommand{\emptyset}{{\diameter}}
\newcommand*{\myDots}{\ifmmode\mathellipsis\else.\kern-0.07em.\kern-0.07em.\fi}
\allowcomments{\commumtaz}{MH}{Mumtaz}{green}
\allowcomments{\comnikita}{NS}{Nikita}{blue}
\allowcomments{\comgero}{GGR}{Gero}{red}
\newcommand {\ignore}[1] {}
\begin{document}

\title[Restricted slowly growing digits in iIFS]{Restricted slowly growing digits for infinite iterated function systems}

\author[G. Gonz\'alez Robert]{Gerardo Gonz\'alez Robert}
\address{Gerardo Gonz\'alez Robert, Department of Mathematical and Physical Sciences,  La Trobe University, Bendigo 3552, Australia. }
\email{G.Robert@latrobe.edu.au}

\author[M. Hussain]{Mumtaz Hussain}
\address{Mumtaz Hussain,  Department of Mathematical and Physical Sciences,  La Trobe University, Bendigo 3552, Australia. }
\email{m.hussain@latrobe.edu.au}

\author[N. Shulga]{Nikita Shulga}
\address{Nikita Shulga,  Department of Mathematical and Physical Sciences,  La Trobe University, Bendigo 3552, Australia. }
\email{n.shulga@latrobe.edu.au}
\date{}

\author[H. Takahasi]{Hiroki Takahasi}
\address{Hiroki Takahasi, Department of Mathematics, Keio University, Yokohama, 223-8522, Japan.}
\email{hiroki@math.keio.ac.jp}

\subjclass[2020]{11K50, 11K55, 11A55, 37C45}%{37C05, 37D25}
\thanks{{\it Keywords}: Iterated Function Systems (IFSs); 
continued fraction; Hausdorff dimension.}
\maketitle
    \begin{abstract}
%Let $\bff:=(f_n)_{n\geq 1}$ be an infinite iterated function system on $[0,1]$ satisfying some natural regularity properties, and denote its attractor by $\Lambda(\bff)$. For any infinite subset $D\subseteq \mathbb{N}$ define
%\[
%\E(\bff,D):=\{ x \in \Lambda(\bff): a_n(x)\in D \text{ for all }n\in\N \text{ and }\lim_{n\to\infty} a_n=\infty\}.
%\]
%We prove the Hausdorff dimension result under the introduction of an additional restriction on a slow growth on digits, i.e. we consider a set
%\[
%S(\bff,D,\varphi):=\left\{ x\in \E(\bff,D) :  a_n(x)\leq \varphi(n) \text{ for all } n\in\N  \right\}
%\]
%for any function $\varphi:\mathbb{N}\to [\min D, \infty)$ such that $\varphi(n)\to\infty$ when $n\to\infty$. We show that its Hausdorff dimension stays the same no matter how slowly the function $\varphi$ grows.
%We show that its Hausdorff dimension stays the same no matter how slowly the function $\varphi$ grows. One of the consequences of our result is the recent work of Takahasi (2023), which only dealt with regular continued fraction expansions. 
%We also extend our result to the slow growth of a product of $m\in\N$, not necessarily consecutive digits.
%We also extend our result to slowly growing products of (not necessarily consecutive) digits.
%\textcolor{blue}{(My opinion. In the abstract, definitions should be avoided (unnecessary). Math symbols should be as least as possible.) 
For an infinite iterated function system $\bff$ on $[0,1]$ with an  attractor $\Lambda(\bff)$ and for an infinite subset $D\subseteq \mathbb{N}$, consider the set
\[
\E(\bff,D)=
\{ x \in \Lambda(\bff): a_n(x)\in D \text{ for all }n\in\N \text{ and }\lim_{n\to\infty} a_n=\infty\}.
\]
For a function $\varphi:\mathbb{N}\to [\min D, \infty)$ such that $\varphi(n)\to\infty$ as $n\to\infty$,
we compute the Hausdorff dimension of the set
\[
S(\bff,D,\varphi)
=
\left\{ x\in \E(\bff,D) :  a_n(x)\leq \varphi(n) \text{ for all } n\in\N  \right\}.
\]
We prove that the Hausdorff dimension
stays the same no matter how slowly the function $\varphi$ grows.
One of the consequences of our result is the recent work of Takahasi (2023), which only dealt with regular continued fraction expansions. 
%We also extend our result to the slow growth of a product of $m\in\N$, not necessarily consecutive digits.
We further extend our result to slowly growing products of (not necessarily consecutive) digits.  

    \end{abstract}

% \author[M. Hussain]{Mumtaz Hussain}
% \address{Mumtaz Hussain,  Department of Mathematical and Physical Sciences,  La Trobe University, Bendigo 3552, Australia. }
% \email{m.hussain@latrobe.edu.au}

% \author[N. Shulga]{Nikita Shulga}
% \address{Nikita Shulga,  Department of Mathematical and Physical Sciences,  La Trobe University, Bendigo 3552, Australia. }
% \email{n.shulga@latrobe.edu.au}

% {\color{red} \textbf{Suggested abstract.} Let $\bff:=(f_n)_{n\geq 1}$ be an infinite iterated function system on $[0,1]$ with $\boldsymbol{\xi}$-regularity and the bounded distortion property, and $\Lambda(\bff)$ its attractor. For any infinite subset $D\subseteq \mathbb{N}$ define the set
% \[
% \E(\bff,D)
% :=
% \{ x \in \Lambda(\bff): a_n(x)\in D \text{ for all }n\in\N \text{ and }\lim_{n\to\infty} a_n=\infty\}.
% \]
% For each function $\varphi:\mathbb{N}\to [\min D, \infty)$ such that $\varphi(n)\to\infty$ when $n\to\infty$, we compute the Hausdorff dimension of the set 
% \[
% S(\bff,D,\varphi)
% :=
% \left\{ x\in \E(\bff,D) :  a_n(x)\leq \varphi(n) \text{ for all } n\in\N  \right\}.
% \]
% A consequence of our result is the recent work Takahasi (2023), the fourth named author. However, our proofs do not involve ergodic theoretic tools for the Gauss map and the mass distribution principle used by Takahasi, and rely on an explicit construction of Cantor sets. We extend our results to the product of $m\in\N$ digits.} 

\section{Introduction}

%\textcolor{blue}{The current format is unfriendly to my eyes, and probably the same to editors and referees. Can you enlarge the font size and put more side margins?}

%\commumtaz{In my opinion, our results should be stated within the first two pages of the paper. One way to do it is to give a brief introduction, then the general setup and statement of our results, then as corollaries we may list previous significant contributions. If required, in the next subsection, we expand upon the literature and quote necessary results}

The theory of infinite Iterated Function Systems on the unit interval includes important number systems such as the regular continued fractions, whose relevance cannot be understated. Recall that 
each irrational $x\in(0,1)$
   has a unique
 regular continued fraction expansion 
 $x=
 1/(a_1+1/(a_2+\cdots))$, where each partial quotient $a_n=a_n(x)$, $n\geq1$ belongs to the set $\mathbb N$ of positive integers.
A pioneering result in the fractal dimension theory of continued fraction is due to 
 %Among classical results in this direction,
Jarn\'ik who proved that the set of irrationals in $(0,1)$ with bounded partial quotients 
is of Hausdorff dimension $1$.
Good \cite{Good1941} proved that the set of irrationals in $(0,1)$ whose partial quotients tend to infinity
%\[\left\{x\in\mathbb I\colon \lim_{n\to\infty}a_n(x)=\infty\right\}\]
is of Hausdorff dimension $1/2$.
Various extensions and refinements of Good's theorem have been made which determine fractal dimensions of sets of irrationals whose partial quotients obey certain
restrictions or growth conditions, %prescribed conditions,
see %\cite{CaoWangWu2013,Cus90,FLWW,Hir70,Hirst1973,JaeKes10,MR2869111,Luc97,Moo92,Ram85,MR4607611,WangWu08hirst} 
\cite{CaoWangWu2013,Cus90,FLWW,Hir70,MR2869111,Luc97,Moo92,MR4607611,WangWu08hirst} 
for example.

Hirst \cite{Hirst1973} proved results analogous to that of Good in the case when $a_n$'s belong to some subset of $\mathbb N$.
% For every infinite subset $D\subseteq \N$, the convergence exponent of $D$  is 
%\[
%\tau(D)
%:=
%\inf\left\{ s\geq 0: \sum_{n\in D} n^{-s}<\infty\right\}.
%\]
For every infinite subset $D\subseteq \N$,
 Hirst considered the set
\[
E(D)
:=
\left\{x\in(0,1)\setminus\mathbb Q\colon a_n(x)\in D \text{ for all } n\in \N  \text{ and } \lim_{n\to\infty} a_n(x)=\infty \right\}.
\]
He introduced the exponent of convergence 
\[
\tau(D)
:=
\inf\left\{ s\geq 0: \sum_{n\in D} n^{-s}<\infty\right\},
\]
%He introduced a number $\tau(D)$ called the exponent of convergence (see \eqref{exponent} and the remark after it for the definition), 
and proved
%\textcolor{red}{(remove) $\hdim E(D) \leq \tau(D)/2$.}
that the Hausdorff dimension of $E(D)$ does not exceed $\tau(D)/2$.
He also conjectured that $\tau(D)/2$ should be the exact value of the Hausdorff dimension for this set. About thirty five years after the appearance of Hirst's paper, this conjecture was resolved by Wang and Wu in \cite[Theorem~1.1]{WangWu08hirst}.

%He proved the following theorem.
%\begin{theorem}[\cite{Hirst1973}, Theorem 1]\label{thm:Hirst} \begin{equation}     \hdim E(D) \leq \frac{\tau(D)}{2}  . \end{equation} \end{theorem}
%Hirst also conjectured that $\tau(D)/2$ should be the exact value of the Hausdorff dimension for this set. 35 years later than the appearance of Hirst's paper \cite{Hirst1973}, this conjecture was resolved by Wang and Wu in \cite{WangWu08hirst}.
%where they proved
%\begin{theorem}[\cite{WangWu08hirst}, Theorem 1.1]\label{thm:WangWu} \begin{equation}         \hdim E(D) = \frac{\tau(D)}{2} . \end{equation} \end{theorem}
Naturally, one might wonder that if additional restrictions are introduced on the growth rate of $a_n$, the Hausdorff dimension may drop. In this regard, there are two natural sets to consider, 
 \[\{x\in E(D)\colon a_n(x)\geq \varphi(n)\text{ for all } n\in\mathbb N\},\]
 %and
 \[\left\{x\in E(D)\colon a_n(x)\leq \varphi(n)\text{ for all }n\in\mathbb N\right\},\]
 where $\varphi\colon\mathbb{N}\rightarrow \mathbb N$ is a function satisfying $\lim_{n\to \infty}\varphi(n)=\infty$. 
   Interestingly, the %Hausdorff 
   dimension of the first set can drop from $\tau(D)/2$ and even become $0$ when $\varphi$ grows very rapidly \cite{CaoWangWu2013,Cus90,FLWW,MR2869111,Luc97,Moo92,WanWu08II} while
  the %Hausdorff
  dimension of 
  the second set does not drop no matter how slowly $\varphi$ grows, as precisely stated below. Let $\hdim$ denote the Hausdorff dimension. 
  \begin{theorem}[\cite{MR4607611}, Theorem~1.1]\label{thm:Takahasi}
Let $D\subseteq \N$ be an infinite subset and $\varphi\colon\N\to [\min D,\infty)$ such that $\varphi(n)\to\infty$ as $n \to\infty$. Then, 
\[
\hdim\left\{ x\in E(D): a_n(x)\leq \varphi(n) \text{ for all } n\in\N\right\}
=
\frac{\tau(D)}{2}.
\]
\end{theorem}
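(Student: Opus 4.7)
The upper bound $\hdim S(\bff,D,\varphi)\leq\tau(D)/2$ is immediate from $S(\bff,D,\varphi)\subseteq E(D)$ together with the Hirst--Wang--Wu theorem $\hdim E(D)=\tau(D)/2$. The substance of the theorem lies in the matching lower bound, which I would establish by constructing, for each $\varepsilon>0$, a Cantor-type subset $C_\varepsilon\subseteq S(\bff,D,\varphi)$ with $\hdim C_\varepsilon\geq \tau(D)/2-\varepsilon$.

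First, I would approximate $\tau(D)/2$ by dimensions of finite Gauss-type subsystems whose digit sets are simultaneously large and bounded above. Since $\tau$ depends only on the tail of $D$, one has $\tau(D\cap[M,\infty))=\tau(D)$ for every $M\in\N$; and by Mauldin--Urba\'nski theory applied to the infinite IFS $\{T_d(x)=1/(d+x):d\in D\cap[M,\infty)\}$, its limit set has Hausdorff dimension $\tau(D)/2$, realised as the supremum over finite subsystems. Hence I can pick integers $M_k<N_k$ with $M_k\to\infty$ so that the finite CF-subsystem with digit set $F_k:=D\cap[M_k,N_k]$ has attractor $\Lambda_k$ of dimension $s_k\geq \tau(D)/2-1/k$. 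Using $\varphi(n)\to\infty$, I would then choose a rapidly increasing sequence $0=n_0<n_1<n_2<\cdots$ with $\varphi(n)\geq N_k$ for every $n>n_{k-1}$, put $B_k:=\{n_{k-1}+1,\ldots,n_k\}$, and define
\[
C_\varepsilon:=\{x\in(0,1)\setminus\Q:a_n(x)\in F_k\ \text{for all } n\in B_k\ \text{and all } k\geq1\}.
\]
Each $x\in C_\varepsilon$ satisfies $a_n(x)\in D$, $a_n(x)\leq N_k\leq\varphi(n)$ on $B_k$, and $a_n(x)\geq M_k\to\infty$, so $C_\varepsilon\subseteq S(\bff,D,\varphi)$.

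To bound $\hdim C_\varepsilon$ from below, I would place on $C_\varepsilon$ a product mass distribution $\mu$ with independent coordinates on each block $B_k$ and marginal $\mu_k(d)\propto d^{-2s_k}$ on $F_k$; this plays the role of the $s_k$-conformal measure for the finite CF subsystem on $\Lambda_k$. Combining this with the bounded-distortion estimates $|I_{a_1,\ldots,a_n}|\asymp q_n^{-2}$ and $q_n\asymp\prod_{j\leq n}a_j$ (both up to a factor exponential in $n$), the quotient
\[
\frac{\log\mu([a_1,\ldots,a_n])}{\log|I_{a_1,\ldots,a_n}|}=\frac{\sum_{j\leq n}s_{k(j)}\log a_j+O(n)}{\sum_{j\leq n}\log a_j+O(n)}
\]
is, up to the $O(n)$ corrections, a weighted average of the $s_{k(j)}$'s, where $k(j)$ is the block containing $j$. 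Choosing the lengths $|B_k|$ to grow fast enough with $k$ forces this ratio towards $s_k$ whenever $n$ is deep inside $B_k$, hence towards $\tau(D)/2$. The usual bounded-distortion argument then converts this pointwise estimate on cylinders into one on balls (the alphabet $F_k$ is finite on each block, so only boundedly many cylinders of a given generation meet a ball), and the mass distribution principle yields $\hdim C_\varepsilon\geq \tau(D)/2-\varepsilon$.

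The main obstacle is precisely the local-dimension control at small scales: early blocks, where $s_k$ is small, contribute stubbornly to both sums above and must be rendered negligible by the deeper blocks. Concretely, one needs $|B_k|$ to increase fast enough with $k$ (for example, $|B_k|\geq k\sum_{j<k}|B_j|\log N_j$ suffices comfortably) so that the cumulative contribution of blocks $1,\ldots,k-1$ in both numerator and denominator is $o(1)$ relative to that of $B_k$; a Borel--Cantelli-type argument on $\mu$ then removes the exceptional set where the liminf of the local dimension could fall below $\tau(D)/2-\varepsilon$. Letting $\varepsilon\to0$ completes the proof.
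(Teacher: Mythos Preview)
Your proposal is essentially correct, but it follows a genuinely different route from the one taken in this paper.

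The paper deliberately \emph{avoids} the mass distribution principle. Its lower bound is obtained via a Jarn\'ik-type covering lemma (Lemma~\ref{LemHD:LowerBound}): one builds a Cantor set in which, at each level, the leftmost and rightmost children are discarded so that grandchildren of distinct children are separated by at least $|A|^{1+\varepsilon}$, and one verifies the sum condition $\sum_{B\in\Des_n(A)}|B|^{s(1+\varepsilon)}\geq |A|^{s(1+\varepsilon)}$. This yields $\hdim\geq s$ directly, with no measure, no local-dimension computation, and no Borel--Cantelli step. The digit windows $D_n=\{d_{l_n},\ldots,d_{u_n}\}$ drift to infinity slowly enough that the sum condition holds (using only the divergence of $\sum_{d\in D}\xi_d^{-s}$), and fast enough that $a_n\leq\varphi(n)$ and $a_n\to\infty$. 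Your block construction $F_k=D\cap[M_k,N_k]$ with $|B_k|$ growing rapidly is morally the same Cantor set; the difference is entirely in how its dimension is bounded from below.

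What each approach buys: your mass-distribution argument is the standard, familiar technique and connects naturally to conformal/Gibbs measures (it is in spirit closer to Takahasi's original proof in \cite{MR4607611}, which invoked ergodic theory of the Gauss map). The paper's approach is shorter and more elementary, sidestepping the two places where your sketch needs genuine work---controlling the normalization constants $\log Z_k$ in the $O(n)$ term as $F_k$ varies, and passing from cylinders to balls when the alphabet size $|F_k|$ is unbounded in $k$---and it transfers without change to general $\boldsymbol{\xi}$-regular iIFS with bounded distortion, which is the paper's actual goal.
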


  This `no dimension drop' 
  seems to be a ubiquitous phenomenon, and generalizations of
   the above theorem
   to diverse settings are desired.
    The first generalization of Theorem \ref{thm:Takahasi} was done by Nakajima and the last-named author in \cite{NakajimaTakahasi2023} to the semi-regular continued fractions, viewed as a non-autonomous iterated function system \cite{RU16}. 
   In this paper, we generalize Theorem~\ref{thm:Takahasi} in two directions while providing a much simpler proof.% than the one restricted  only to regular continued fractions in \cite{MR4607611}. %\textcolor{blue}{(Add) with a much simpler proof than \cite{MR4607611}}. 

 First, we treat a question of the slow growth of each digit in an infinite Iterated Function System (iIFS) (Theorem~\ref{thm:onedigit}).
   Second, we extend this result to the weighted products of an arbitrary finite number of digits in the given iIFS
(Theorem~\ref{thm:product}).
  Below we introduce a definition of an iIFS and state the main results.
%  In this paper we extend a result of Theorem~\ref{thm:Takahasi} to  any $\boldsymbol{\xi}$-regular iIFS $\bff$.   Moreover, we show that the same result would hold if we bound a weighted product of arbitrary number of digits of the given iIFS, instead of bounding just each one of the digits.

\subsection{Infinite Iterated Function Systems}
Consider a sequence $\bff=(f_n)_{n\geq1}$ of functions $f_n : [0,1] \to [0,1]$, satisfying 
\begin{enumerate}[label={\roman*}.]
\item \textit{\bf Smoothness Property:} $f_n\in C^1([0,1])$ for each $n\geq1$;

\item \textit{\bf Contraction Property:} There exists an integer $m$ and a real number $0 < \rho < 1$, such that for any $(a_1,\ldots,a_m)\in \N^m$ and $x\in[0,1]$,
\[
0< | ( f_{a_1} \circ \cdots \circ f_{a_m})^\prime (x) | \leq \rho <1;
\]

 \item \textit{\bf Open Set Condition:} For any $i\neq j\in\N$, we have $f_i((0,1)) \cap f_j((0,1)) = \origemptyset.$   
\end{enumerate}
The system $\bff=(f_n)_{n\geq1}$ is called an \textit{infinite iterated function system} (iIFS). 
By the smoothness and the contraction properties, we can define a natural projection $\Pi_{\bff}: \N^\N \to [0,1]$ as
\[
\Pi_{\bff}(\bfa) = \lim_{n\to\infty} f_{a_1} \circ \cdots \circ f_{a_n} (1),
\]
where $\bfa=(a_n)_{n\geq1} \in \N^\N$. Also, we denote by $\Lambda(\bff)$ the attractor of the iIFS $(f_n)_{n\geq1}$, that is,
\[
\Lambda(\bff) = \Pi_{\bff} (\N^\N).
\]

%----------(removable?)--------
%The smoothness and the contraction property of an iIFS guarantee the existence of the limit defining $\Pi_{\bff}$.  The name \emph{attractor} reminisces us of finite iterated function systems. When we consider finitely many contractions $f_1,\ldots, f_n$, there is a unique compact set $K$ satisfying  
%\[
%K=\bigcup_{j=1}^n f_j(K).
%\]
%This set would be the unique fixed point, hence an attractor, of a contraction function defined on the family of compact subsets of $[0,1]$ to itself \cite[Chapter 9]{falconer_book2014}. ---------------

By the open set condition, 
%except for countably many points in $\Lambda$, to each $x \in \Lambda$, 
to all but countably many points $x$ in $\Lambda$
we can associate a unique sequence $(a_n)_{n\geq1}$ of integers such that
\[
x = \lim_{n\to\infty} f_{a_1} \circ \cdots \circ f_{a_n} (1).
\]
For a given $x$ we refer to the sequence $(a_n)_{n\geq1}= (a_n(x))_{n\geq1}$ as a \textit{sequence of digits} of $x$.

We say that an iIFS $\bff=(f_n)_{n\geq 1}$ is $\boldsymbol{\xi}$\textit{-regular} if in addition it satisfies 
\begin{enumerate}[label={\roman*}.]
\stepcounter{enumi}\stepcounter{enumi}\stepcounter{enumi}
\item \textit{\bf Regularity Property}: There exists a sequence of positive numbers $\boldsymbol{\xi}=(\xi_n)_{n\geq 1}$ such that for every $\varepsilon>0$ there are positive constants $c_1,c_2$ depending on $\varepsilon$ such that 
\[
\frac{c_1}{\xi_{n}^{1 + \varepsilon}} \leq |f'_n(x)| \leq \frac{c_2}{\xi_{n}^{1 - \varepsilon}}
\quad
\text{ for all }  n\in\N 
\text{ and  all }  x\in [0,1].
\]
\end{enumerate}
If $\xi_n = n^d$, the $\boldsymbol{\xi}$-regular iIFS $\bff$ is called $d$-decaying system, as defined by Jordan and Rams in \cite{MR2869111}. Finally, we assume that iIFS satisfies the following condition.
\begin{enumerate}[label={\roman*}.]
\stepcounter{enumi}\stepcounter{enumi}\stepcounter{enumi}\stepcounter{enumi}
\item \textit{\bf Bounded Distortion Property} (BDP): There exists a constant $\kappa\geq1$ such that for every $n\in\N$ and $a_1,\ldots, a_n \in \N$ we have 
\[
\left|(f_{a_1}\circ\ldots \circ f_{a_n})'(x) \right|
\leq
\kappa \left|(f_{a_1}\circ\ldots \circ f_{a_n})'(y) \right|
\text{ for all }x,y\in [0,1].
\]
\end{enumerate}
We note that all natural examples of IFS satisfy BDP, see Section \ref{sec:examples} below.

%In this paper we extend a result of Theorem~\ref{thm:Takahasi} to any $\boldsymbol{\xi}$-regular iIFS $\bff$. Or, alternatively, we introduce a restriction on a slow growth of digits for the set from Theorem \ref{thm:CWW}. Moreover, we show that the same result would hold if we bound a weighted product of arbitrary $m$ digits of the given iIFS, instead of bounding just each one of the digits.

\subsection{Statements of results}
Let %$\bff=(f_n)_{n\geq 1}$ 
$\bff$ be a $\boldsymbol{\xi}$-regular iIFS.
%with the Bounded Distortion Property. %and $D\subseteq \N$. 
For an infinite subset $D\subseteq \N$ 
%and any sequence of positive numbers $\boldsymbol{\xi}=(\xi_n)_{n\geq 1}$ 
define 
\[
\E(\bff,D):=\left\{ x\in \Lambda(\bff): a_n(x)\in D \text{ for all } n\in\N, \lim_{n\to\infty} a_n(x)=\infty  \right\},
\]
and
\begin{equation}\label{exponent}
s_0(D;\boldsymbol{\xi})
:=
\inf\left\{ s\geq 0: \sum_{n\in D} \frac{1}{\xi_n^s} <\infty\right\}.
\end{equation}
For $2$-decaying systems such as the regular continued fractions, we have $\tau(D)=2s_0(D;\boldsymbol{\xi})$.
 In \cite{CaoWangWu2013},
% \cite[Theorem~1.1]{CaoWangWu2013}, 
 the solution of Hirst's conjecture was generalized to digits in an iIFS in the following way:
    If $\bff$ satisfies the BDP, then 
   $\hdim \E(\bff,D)=s_0(D;\boldsymbol{\xi}).$

For a function $\varphi:\N\to [\min D,\infty)$,
%such that $\varphi(n)\to\infty$ when $n\to\infty$,
define 
% \[
% S(\bff,D,\varphi):=\left\{ x\in \Lambda(\bff): a_n(x)\in D, \, a_n(x)\leq \varphi(n) \text{ for all }  n\in\N, \text{ and }   \lim_{n\to\infty} a_n(x)=\infty  \right\}
% \]
\[
S(\bff,D,\varphi):=\left\{ x\in \E(\bff,D) :  a_n(x)\leq \varphi(n) \text{ for all } n\in\N  \right\}.
\]
The first main result of the present paper is the following statement.

\begin{theorem}\label{thm:onedigit}
Let $\bff$ be a $\boldsymbol{\xi}$-regular iIFS with the BDP. For every infinite subset $D\subseteq \N$ and
 any function $\varphi:\N\to [\min D,\infty)$ such that $\varphi(n)\to\infty$ as $n\to\infty$,
we have 
\[
\hdim S(\bff,D,\varphi) = s_0(D;\boldsymbol{\xi}).
\]
\end{theorem}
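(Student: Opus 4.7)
Since $S(\bff,D,\varphi)\subseteq \E(\bff,D)$ and $\hdim \E(\bff,D)=s_0(D;\boldsymbol{\xi})$ by the theorem of \cite{CaoWangWu2013} recalled just before the statement, the upper bound $\hdim S(\bff,D,\varphi)\leq s_0(D;\boldsymbol{\xi})$ comes for free. Only the matching lower bound requires work.

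\textbf{Lower bound, construction of a Cantor subset.} The plan is, for each $s<s_0(D;\boldsymbol{\xi})$, to build a Cantor-type set $K\subseteq S(\bff,D,\varphi)$ whose Hausdorff dimension can be forced arbitrarily close to $s$. Because $\sum_{n\in D}\xi_n^{-s}=+\infty$ by definition of $s_0$ and this divergence persists after removing any finite prefix, one can pick inductively disjoint finite subsets $F_1,F_2,\ldots\subseteq D$ with $\min F_{k+1}>\max F_k$ and
\[
\sigma_k\defeq\sum_{n\in F_k}\xi_n^{-s}\geq 1.
\]
Using $\varphi(n)\to\infty$, block endpoints $0=L_0<L_1<L_2<\ldots$ can then be chosen so that $\varphi(n)\geq\max F_k$ for every $n>L_{k-1}$. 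Put
\[
K:=\bigl\{\Pi_{\bff}(\bfa):a_n\in F_k\text{ whenever }L_{k-1}<n\leq L_k,\ k\geq 1\bigr\}.
\]
By construction, $K\subseteq S(\bff,D,\varphi)$: the digits lie in $D$, are bounded above by $\max F_k\leq\varphi(n)$ in block $k$, and diverge to infinity because $\min F_k\to\infty$.

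\textbf{Mass distribution.} On $K$ I would place the Bernoulli-type probability measure $\mu$ assigning to a level-$L_k$ cylinder the mass
\[
\mu([a_1,\ldots,a_{L_k}])=\prod_{j=1}^k\sigma_j^{-\ell_j}\prod_{i=L_{j-1}+1}^{L_j}\xi_{a_i}^{-s},\qquad\ell_j:=L_j-L_{j-1}.
\]
Since $\sigma_j\geq 1$, the normalising factor is at most $1$, which gives $\mu([a_1,\ldots,a_n])\leq\prod_{i=1}^{n}\xi_{a_i}^{-s}$ at every level. For any $\varepsilon>0$, the regularity property combined with the BDP furnishes a constant $c=c(\varepsilon)>0$ such that
\[
\diam([a_1,\ldots,a_n])\geq c\prod_{i=1}^n\xi_{a_i}^{-(1+\varepsilon)},
\]
which together with the previous display yields $\mu(C)\leq c^{-s/(1+\varepsilon)}\diam(C)^{s/(1+\varepsilon)}$ on every cylinder $C$. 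A Moran-type covering argument, using the open set condition to control overlaps at the relevant scale, upgrades this cylinder estimate to a ball estimate $\mu(B(x,r))\leq C\,r^{s/(1+\varepsilon)}$ valid for every $x\in K$ and all sufficiently small $r$. The mass distribution principle then gives $\hdim K\geq s/(1+\varepsilon)$, and letting $\varepsilon\downarrow 0$ and $s\uparrow s_0(D;\boldsymbol{\xi})$ closes the argument.

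\textbf{Where the difficulty lies.} Everything is routine except for the passage from cylinder estimates to ball estimates. The ratio $\diam([a_1,\ldots,a_n])/\diam([a_1,\ldots,a_{n+1}])\asymp\xi_{a_{n+1}}^{1\pm\varepsilon}$ can be arbitrarily large, so a ball whose radius falls between two such scales may meet many cylinders at the finer level. I would handle this by a case analysis separating radii that sit inside a single block from those straddling a block boundary: in the former, the bound $a_{n+1}\leq \max F_k$ makes the ratio controllable by a constant depending on the block, while in the latter, the mass of the coarser enclosing cylinder is already an acceptable bound. Once this point is dispatched, the remainder is a direct adaptation of the standard Moran-cover technique, and the slow growth of $\varphi$ enters only through the innocuous freedom to enlarge the block endpoints $L_k$.
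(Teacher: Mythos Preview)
Your high-level strategy is sound, but it diverges from the paper's, and your sketch leaves the hard step genuinely unfinished.

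\textbf{How the paper proceeds.} The paper explicitly \emph{avoids} the mass distribution principle. Instead it proves a separation lemma (Lemma~\ref{LemHD:LowerBound}): if at every level the \emph{grandchildren} of distinct children of $A$ are at distance at least $|A|^{1+\varepsilon}$, and if $\sum_{B\in\Des(A)}|B|^{s(1+\varepsilon)}\geq |A|^{s(1+\varepsilon)}$, then $\hdim\mathbf A_\infty\geq s$. The separation is manufactured by discarding, at every stage, the leftmost and rightmost child; the inequality is arranged by taking each digit block $D_n$ large enough that $\sum_{d\in D_n}(\kappa^{-1}c_1\xi_d^{-(1+\varepsilon)})^{s(1+\varepsilon)}\geq 3$, so that removing two terms still leaves at least $1$. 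The block boundaries $N_j$ are chosen to satisfy \emph{two} conditions: the $\varphi$-constraint \eqref{Eq:DefBn:01}, and a size condition \eqref{Eq:DefBn:02} ensuring $|K_{n+2}|\geq |K_n|^{1+\varepsilon}$. The lemma then converts any $\delta$-cover directly into a lower bound for $\sum|G|^s$ without ever passing through balls.

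\textbf{Where your argument is incomplete.} Your case analysis for the ball estimate does not close. You say that within a block the ratio $\diam(I_n)/\diam(I_{n+1})$ is ``controllable by a constant depending on the block'', but that constant is essentially $\max_{a\in F_k}\xi_a^{1+\varepsilon}$, which tends to infinity with $k$. Feeding it into $\mu(B(x,r))\leq \mu(I_n)\lesssim |I_n|^{s/(1+\varepsilon)}$ yields $\mu(B(x,r))\lesssim (\max_{a\in F_k}\xi_a)^{s}\, r^{s/(1+\varepsilon)}$, and the prefactor is not uniform. The fix is to impose a \emph{second} constraint on the block endpoints, say $(\max_{a\in F_{k}}\xi_a)^{s}\leq \rho^{-\varepsilon' L_{k-1}/m}$, so that for $r<|I_n|\leq \rho^{L_{k-1}/m}$ the bad factor is absorbed as $r^{-\varepsilon'}$; then the mass distribution principle gives $\hdim K\geq s/(1+\varepsilon)-\varepsilon'$. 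This is exactly the analogue of the paper's condition \eqref{Eq:DefBn:02}, and your closing sentence (``innocuous freedom to enlarge $L_k$'') does not substitute for stating and using it. Once this extra choice is made explicit, your route does go through; the paper's route simply sidesteps the whole ball-versus-cylinder discussion at the cost of a short covering lemma and the harmless removal of two children per level.
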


%\textcolor{red}{The motivation for considering products of digits should be explained, citing Hussain...}
Theorem~\ref{thm:onedigit} concerns the behavior of a single digit.
 Kleinbock and Wadleigh \cite{KleinbockWadleigh} proved that the product of consecutive partial quotients in regular continued fractions is closely related to improvements of Dirichlet's Theorem in Diophantine approximation. Such sets have been further studied in \cite{HuangWuXu2020,HKWW, HussainShulgaExponential} for example. 
 Hence, it is natural to generalise Theorem~\ref{thm:onedigit} to include products of digits. 

%Using the result of 
Using Theorem~\ref{thm:onedigit} we derive a much more general result involving weighted products of a fixed number of digits. 
%To be more precise, 
For a fixed $m\in\N$ consider $\overline{t} = (t_1,\ldots,t_{m})\in \R_{>0}^m$ and a vector $\overline{g} = (g_1,\ldots,g_m)$ of arbitrary functions $g_i\colon\N\ \to \N, 1\leq i \leq m$. Define a set
\[
P_m(\bff,D,\varphi, \overline{t}, \overline{g}):= \left\{ x\in \E(\bff,D) :  a_{g_1(n)}^{t_1}(x)\cdots a_{g_m(n)}^{t_{m}}(x)\leq \varphi(n) \text{ for all } n\in\N  \right\}
\]
and denote $M(D):=\max\{\min D, (\min D)^{m \max_i t_i} \}$. We have the following theorem.
%Our second result is a statement on the Hausdorff dimension of this set.
\begin{theorem}
\label{thm:product}Let $\bff$, $D$ be as in Theorem~\ref{thm:onedigit} and let $\varphi: \N \to [M(D),\infty)$ be such that $\varphi(n)\to\infty$ as $n\to\infty$. For any $\overline{t}\in \R_{>0}^m$ and for any functions $g_i\colon \N \to \N, 1\leq i \leq m$, we have
\[
\hdim P_m(\bff,D,\varphi, \overline{t}, \overline{g}) = \hdim S(\bff,D,\varphi) = s_0(D;\boldsymbol{\xi}).
\]
\end{theorem}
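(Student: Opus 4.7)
The upper bound $\hdim P_m(\bff,D,\varphi,\overline{t},\overline{g})\leq s_0(D;\boldsymbol{\xi})$ is immediate from $P_m(\bff,D,\varphi,\overline{t},\overline{g})\subseteq \E(\bff,D)$ combined with the identity $\hdim \E(\bff,D)=s_0(D;\boldsymbol{\xi})$ from \cite{CaoWangWu2013} recalled above. The content is therefore the lower bound, and my plan is to reduce the multi-digit product condition to a single-digit condition and then invoke Theorem~\ref{thm:onedigit}. Concretely, I will build a function $\psi:\N\to[\min D,\infty)$ with $\psi(n)\to\infty$ such that
\[
S(\bff,D,\psi)\subseteq P_m(\bff,D,\varphi,\overline{t},\overline{g}),
\]
after which Theorem~\ref{thm:onedigit} yields $\hdim P_m\geq \hdim S(\bff,D,\psi)=s_0(D;\boldsymbol{\xi})$, finishing the argument.

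\textbf{Construction of $\psi$.}
Set $T:=\max_{1\leq i\leq m}t_i$, so that $M(D)=\max\{\min D,(\min D)^{mT}\}$. For each $k\in\N$, define
\[
H(k):=\inf\left\{\varphi(n)^{1/(mT)} : n\in\N,\ g_i(n)=k\ \text{for some}\ 1\leq i\leq m\right\},
\]
with the convention $\inf\emptyset=+\infty$, and then put
\[
\psi(k):=\max\bigl(\min D,\, \min(k,H(k))\bigr).
\]
Since $\varphi\geq M(D)\geq(\min D)^{mT}$, every element in the defining set of $H(k)$ is at least $\min D$, so $\psi(k)\geq\min D$, and by construction $\psi(k)\leq H(k)$ for every $k$.

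\textbf{The two verifications.}
First, for $x\in S(\bff,D,\psi)$ and any $n\in\N$, the choice $k=g_i(n)$ places $n$ itself into the defining set of $H(g_i(n))$, which gives $\psi(g_i(n))\leq \varphi(n)^{1/(mT)}$. Multiplying over $i$ yields
\[
\prod_{i=1}^m a_{g_i(n)}^{t_i}(x)\leq \prod_{i=1}^m \psi(g_i(n))^{t_i}\leq \varphi(n)^{(\sum_i t_i)/(mT)}\leq \varphi(n),
\]
where the last step uses $\sum_i t_i\leq mT$ and $\varphi\geq 1$; hence $S(\bff,D,\psi)\subseteq P_m$. Second, to show $\psi(k)\to\infty$, suppose to the contrary that some subsequence $k_j\to\infty$ satisfies $\psi(k_j)\leq C$. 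For $j$ large the outer $\min(k_j,H(k_j))$ must equal $H(k_j)$, so $H(k_j)\leq C$, giving indices $n_j$ and $i_j$ with $g_{i_j}(n_j)=k_j$ and $\varphi(n_j)\leq C^{mT}$. But $\varphi(n)\to\infty$ forces $\{n:\varphi(n)\leq C^{mT}\}$ to be finite, so the $n_j$ lie in a finite set and the images $k_j=g_{i_j}(n_j)$ do as well, contradicting $k_j\to\infty$.

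\textbf{Anticipated difficulty.}
The delicate ingredient is the divergence $\psi(k)\to\infty$, and it is here that the proof captures the "no-dimension-drop" philosophy in the product setting. No monotonicity or injectivity is assumed on the $g_i$, so a single index $k$ may be hit infinitely often; nonetheless, the slow-growth hypothesis $\varphi(n)\to\infty$ alone is enough to force $\psi$ to diverge, because only finitely many $n$ can carry small $\varphi$-values. Once $\psi$ is in hand, no further dimension estimate is required: the entire multi-digit statement collapses onto Theorem~\ref{thm:onedigit}.
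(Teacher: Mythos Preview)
Your proof is correct and follows the same strategy as the paper: reduce the product condition to a single-digit condition by constructing an auxiliary function $\psi$ (the paper calls it $\zeta$) with $\psi(n)\to\infty$ and $S(\bff,D,\psi)\subseteq P_m(\bff,D,\varphi,\overline{t},\overline{g})$, then invoke Theorem~\ref{thm:onedigit}. Your construction is in fact tidier than the paper's---you use the uniform exponent $1/(mT)$ and the clamp $\min(k,H(k))$, whereas the paper first passes to a strictly increasing minorant of $\varphi$, treats each $g_i$ separately with exponent $1/(mt_i)$, and handles indices outside $g_i(\N)$ via an ad hoc quantity $R_{i,k}$; your divergence argument for $\psi$ via the finiteness of $\{n:\varphi(n)\leq C^{mT}\}$ is also cleaner.
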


%The result of this theorem
Theorem~\ref{thm:product} means that in fact 
the Hausdorff dimension of $P_m(\bff,D,\varphi, \overline{t}, \overline{g})$ does not depend on $m,\overline{t}$ and functions $g_i$'s no matter how they behave.
In particular, if we let $g_i(n)=n+i$ and $\overline{t}=(1,\ldots,1)$, we get a set of slowly growing products of consecutive digits 
%of the $\boldsymbol{\xi}$-regular iIFS
\[
\left\{ x\in \E(\bff,D) :  a_{n+1}(x)\cdots a_{n+m}(x)\leq \varphi(n) \text{ for all } n\in\N  \right\}.
\]
If we let $g_i(n) = ni$ and $\overline{t}=(1,\ldots,1)$, we get a set of slowly growing products of digits, located in an arithmetic progression %$\boldsymbol{\xi}$-regular iIFS
\[
\left\{ x\in \E(\bff,D) :  a_{n}(x)\cdots a_{nm}(x)\leq \varphi(n) \text{ for all } n\in\N  \right\}.
\]
%\textcolor{red}{
The set of numbers with large products of digits in arithmetic progressions (that is, numbers $x\in[0,1]$ with $a_{n}(x)\cdots a_{nm}(x)\geq \varphi(n)$ for infinitely many $n$) for the case of the regular continued fractions was studied in \cite{HussainShulgaFiniteProgressions}.%}

\subsection{Examples of infinite iterated function systems}\label{sec:examples}
%\textcolor{green}{Hiroki: For a better organization, I think it might be better to put the list of examples to the end of section 3.}
%\textcolor{red}{Nikita: I think the examples are more suitable here, as we have just stated the results OK(Hiroki)}
We present several examples of $\boldsymbol{\xi}$-regular iIFS to which our main results apply.
%\begin{enumerate}
    %\item 

\subsubsection{$2$-decaying systems}
    \begin{itemize}
    \item {\bf Regular Continued fractions.} For each $n\in\N$ define %\textcolor{red}{(remove) $f_n:[0,1)\to [0,1)$}
    %\textcolor{blue}{(replace) $f_n:[0,1]\to [0,1]$}
    $f_n:[0,1]\to [0,1]$ by
$$
f_n(x) = \frac{1}{x+n}, \quad x\in [0,1].
$$
This iIFS is induced by the Gauss map $T: [0,1]\to[0,1]$ defined by $T(0)=0$ and 
$$T(x) = \frac{1}{x} \mod\, 1 \quad\text{ for } x\neq0.$$
The contraction constants are $m=2$ and $\rho=\frac{1}{2}$ and the BDP can be shown as in \cite[Lemma~2.4]{NakajimaTakahasi2023}.
\item {\bf L\"uroth expansions.}  For each $n\in\N$ define $f_n:[0,1]\to [0,1]$ by
    %\textcolor{blue}{(replace) $f_n:[0,1]\to [0,1]$} by
$$
f_n(x) = \frac{x}{n(n+1)} + \frac{1}{n+1}, \quad x\in(0,1], n\in\N.
$$
This system is induced by the L\"{u}roth map $T: (0,1]\to (0,1]$ defined by
\begin{equation}
\label{lurothmap}
T(x)=d_1(x)(d_1(x)-1)\left(x-\frac{1}{d_1(x)}\right),\ \  \text{where}\hspace{0.3cm} d_1(x)=\left[\frac{1}{x}\right]+1.
\end{equation}
% \item Even Integer Continued Fractions (see, for example, \cite[Chapter 3]{Schwieger1995} ).  The iIFS $\bff=(f_n)_{n \geq 1}$ given by the maps
% \[
% f_n(x)
% :=
% \frac{1}{-x + n+1} \text{ for odd } n\in \N \text{ and all } x\in [0,1],
% \]
% and 
% \[
% f_n(x)
% :=
% \frac{1}{x + n} \text{ for even } n\in \N \text{ and all } x\in [0,1],
% \]
% is induced by the map $T:[0,1]\to [0,1]$ given by
% \[
% T(x)
% =
% \begin{cases}
% x^{-1} - 2k, &\text{ if } (2k+1)^{-1} \leq x \leq (2k)^{-1}, k\in\N,\\
% 2k - x^{-1}, &\text{ if } (2k)^{-1}\leq x \leq (2k-1)^{-1}, k\in\N
% \end{cases}
% \]
% \item Odd-Odd Continued Fractions \cite{KimLeeLiao2022}. The iIFS $\bff=(f_n)_{n \geq 1}$ given by the maps

% \[
% f_n(x)
% :=
% \frac{(n+1) x + (n-1)}{(n+3)x + n+1} \text{ for odd } n\in \N \text{ and all }
% x\in [0,1],
% \]
% and
% \[
% f_n(x)
% :=
% \displaystyle \frac{(n-2)x + n}{nx + n+2} \text{ for even } n\in \N \text{ and all }
% x\in [0,1].
% \]
% is induced by the map $T:[0,1]\to [0,1]$ given by
% \[
% T(x)
% :=
% \begin{cases}
% \frac{kx - (k-1)}{k - (k+1)x}, &\text{ if } 1 - \frac{2}{2k} \leq x \leq 1 - \frac{2}{2k+1}, \, k\in \N, \\
% \frac{k - (k+1)x}{kx - (k-1)}, &\text{ if } 1 - \frac{2}{2k + 1} \leq x \leq 1 - \frac{2}{2 k+ 2}, \, k\in \N,\\
% 1, &\text{ if } x=1.
% \end{cases}
% \]

The contraction constants are $m=1$ and $\rho=\frac{1}{2}$ and the BDP holds for $\kappa=1$. \textsc{Figure}~\ref{Fig:GMLM} shows the graphs of the Gauss and the Lüroth maps.
%\item tbd

%\begin{figure}[ht!]
\begin{figure}[ht!]
\begin{center}
\includegraphics[scale=0.75,  trim={5.0cm 16cm 8.5cm 4.0cm},clip]{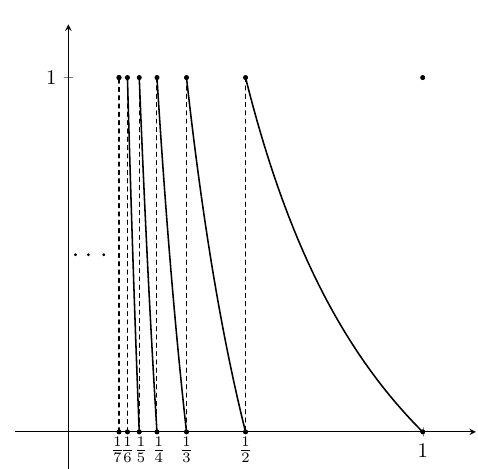}
\includegraphics[scale=0.75,  trim={5.0cm 16cm 8.5cm 4.0cm},clip]{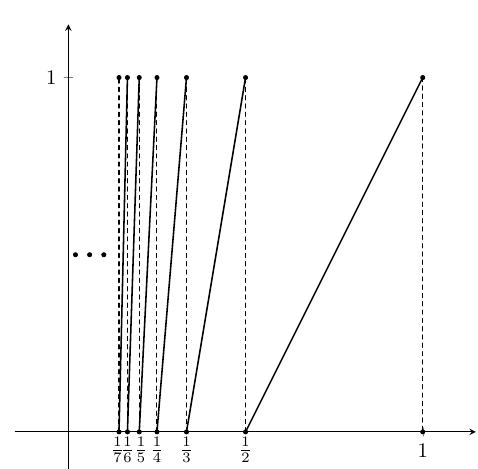}
\caption{ Left: the Gauss map. Right: the L\"uroth map.}\label{Fig:GMLM}
\end{center}
\end{figure}
\end{itemize}

%\item $d$-decaying systems for $d\neq2$.
\subsubsection{$d$-decaying systems for $d\neq2$}
\begin{itemize}
    \item {\bf Quadratic Gauss map.} This is a particular case of $f$-expansion (as defined by R\'enyi \cite{Renyi1952}) which is a $3$-decaying iIFS. Lett $p,q\in \R$ be such that $p\geq -1$ and $q>0$. Consider the polynomial $P(x)=x^2 +px + q$ and define $f:[0,\infty)\to (0,1]$ by $f(x) = \frac{q}{P(x) + x}$. Then the map is defined as 
    $$T(x) = f^{-1}(x) \mod\, 1.$$

    In particular, if we let $p=q=1$, then $f(x) = \frac{1}{x^2+2x+1}=\frac{1}{(x+1)^2}$, the map is
$$
T(x) = \frac{1}{\sqrt{x}} \mod\, 1,
$$
(see \textsc{Figure}~\ref{Fig:QGM}) and we have
$$
f_n(x) = \frac{1}{(x+n)^2}, \quad x\in[0,1], n\in\N.
$$
    
    %The parameters of the Contraction Property are $m=2$ and $\rho=\frac{1}{2}$. Again, using the Contraction Property and that the derivatives $f_n'$ are uniformly bounded and arguing as in \cite[Lemma~2.4]{NakajimaTakahasi2023}, we can show the BDP. We refer the reader to Chapters 3 and 13 in \cite{Schwieger1995} for more details. The map $T$ is depicted in \textsc{Figure}~\ref{Fig:QGM}.
    The parameters of the Contraction Property are $m=2$ and $\rho=\frac{1}{2}$. Again, we can show the BDP as in \cite[Lemma~2.4]{NakajimaTakahasi2023}. We refer the reader to 
    %Chapters 3 and 13 in 
    \cite[Chapters~3 \& 13]{Schwieger1995} for more details. 
\begin{figure}[ht!]
\begin{center}
\includegraphics[scale=0.75,  trim={5.0cm 15.4cm 8.5cm 4.0cm},clip]{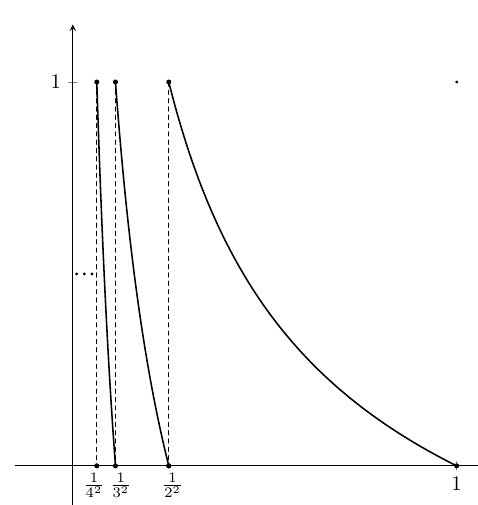}
\includegraphics[scale=0.75,  trim={5.0cm 16cm 9.0cm 4.0cm},clip]{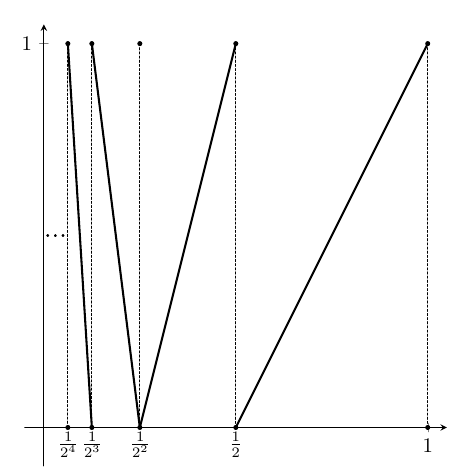}
\caption{Left: The Quadratic Gauss map. Right: The Generalised L\"uroth Series.}\label{Fig:QGM}
\end{center}
\end{figure}

\end{itemize}
        
%\item $\boldsymbol{\xi}$-regular iIFSs, which are not $d$-decaying for any $d$.
\subsubsection{
$\boldsymbol{\xi}$-regular iIFSs, which are not $d$-decaying for any $d$.}

\begin{itemize}
    \item {\bf Generalised L\"uroth series (GLS)} was introduced by Barrionuevo et al. in  \cite{BarrionuevoBurtonDajaniKraaikamp1996}. GLS may have finitely or infinitely many digits. In the latter case, they can be interpreted as a $\boldsymbol{\xi}$-regular iIFS.  Let $\varepsilon=(\varepsilon_j)_{j\geq 1} $ be any sequence in $\{0,1\}$, and let $\{I_n=(l_n,r_n]:n\in\N\}$ be a collection of pairwise disjoint intervals such that
\[
\sum_{n\in \N} (r_n - l_n)=1 
\quad\text{ and}\quad
(r_n - l_n)_{n\geq 1} \text{ is non-increasing}.
\]

Write $I_{\infty}:=[0,1]\setminus \bigcup_{n\in \N} I_n$ and define the maps $T,S,\varepsilon:[0,1]\to [0,1]$ by
\[
T(x)
=
\begin{cases}
\frac{x - l_n}{r_n- l_n}, &\text{ if } x\in I_n,\\
0, &\text{ if } x\in I_{\infty}, 
\end{cases}
\quad
S(x)
=
\begin{cases}
\frac{r_n - x}{r_n- l_n}, &\text{ if } x\in I_{n}, \\
0, &\text{ if } x\in I_{\infty},
\end{cases}
\]
\[
\varepsilon(x):=
\begin{cases}
\varepsilon_n, &\text{ if } x\in I_{n},  \\
0, &\text{ if } x\in I_{\infty}.
\end{cases}
\]
The \textit{Generalised Lüroth map} $T_{\varepsilon}:[0,1]\to [0,1]$ is given by
\[
T_{\varepsilon}(x) = \varepsilon(x) S(x) + (1-\varepsilon(x))T(\varepsilon). 
\]
The map $T_{\varepsilon}$ induces the $\xi$-regular iIFS $\bff=(f_n)_{n\geq 1}$ given by
\[
f_n(x)
=
\begin{cases}
- (r_n-l_n)x + r_n, &\text{ if } \varepsilon_n=1, \\
(r_n-l_n)x - l_n, &\text{ if } \varepsilon_n=0.
\end{cases}
\]
The parameters of the Contraction Property are $m=1$ and $\rho=r_1-l_1$. As for the BDP, we have $\kappa=1$. 
%Clearly, we recover the $2$-decaying iIFS for L\"uroth series when $\varepsilon=(\varepsilon_n)_{n\geq 1}$ is given by $\varepsilon_n=0$ for all $n\in\N$ and we consider the intervals $\{((n+1)^{-1},n^{-1}]:n\in\N\}$. 
A non $d$-decaying $\boldsymbol{\xi}$-regular iIFS is obtained when we consider $\{[2^{-n}, 2^{-n+1}):n\in\N\}$ and any sequence $\varepsilon$. \textsc{Figure}~\ref{Fig:QGM} shows the map $T_{\varepsilon}$ for $\varepsilon=(1,1,-1,-1,\ldots)$.
%\begin{figure}[ht!]
% \begin{figure}
% \begin{center}
% \includegraphics[scale=0.75,  trim={5.0cm 16cm 9.0cm 4.0cm},clip]{FIG-GLS.pdf}
% \caption{ The Generalised L\"uroth Series.}\label{Fig:GLS}
% \end{center}
% \end{figure}

The definition of the GLS in \cite{BarrionuevoBurtonDajaniKraaikamp1996} considers a proper infinite subset $B$ of $\N$ rather than $\N$.
%\textcolor{red}{(remove) an infinite subset $B$ of $\N$ rather than $\N$.}
%\textcolor{blue}{(replace) 
%}
However, we may assume $B=\N$ after enlisting the elements of $B$ in an increasing manner.

%    \item tbd
    
\end{itemize}

\subsection{Outline of proofs of the theorems}
The rest of this paper consists of two sections, entirely dedicated to proofs of Theorems~\ref{thm:onedigit} and
\ref{thm:product}. In the proofs, the upper bounds of the Hausdorff dimension are rather trivial as they can be obtained from the known results. The issue is to obtain the lower bounds.
%In this paper we generalize \cite[Theorem~1.1]{MR4607611}   to (autonomous) iterated function systems, with a much simpler proof.  
A derivation of lower bounds on the Hausdorff dimension of fractal sets often relies on the mass distribution principle \cite{falconer_book2014},
   and the construction of a mass usually becomes intricate depending on the structure of the fractal set under consideration. In the proof %of Theorem \ref{thm:Takahasi} in 
   \cite[Theorem~1.1]{MR4607611}, the construction of masses on fractal sets heavily relies on the ergodic theory of the Gauss map generating the continued fraction.
   Our idea is to dispense with the mass distribution principle altogether, and instead explicitly construct a well-organized Cantor set whose Hausdorff dimension can easily be bounded from below. Starting from a sufficiently large $n$, the compact sets in each layer of our Cantor set correspond to certain admissible digits. The lower estimate of its Hausdorff dimension will be a consequence of two properties: the grandchildren of any compact set appearing in any layer are well separated and the digits defining the layers tend to infinity at a very slow pace.

To show the lower bound in
Theorem~\ref{thm:product}, we construct several subsets of the set under consideration, eventually showing that for some specific choice of a growth function, the set from Theorem~\ref{thm:onedigit} is a subset of a set from Theorem~\ref{thm:product}. The upper bound follows trivially by lifting the restriction on the growth of products.

%\textcolor{red}{Can you briefly explain an idea for a proof of Theorem~\ref{thm:onedigit}?}

\section{Lower bound of Hausdorff dimension of certain Cantor subsets %Preliminaries
}\label{Section:Preliminaries}

In this section, we prove a lower bound for the Hausdorff dimension of certain Cantor sets. Similar arguments date back to Jarn\'ik in 1928 \cite{Jarnik1} 
%\textcolor{red}{reference missing?} 
and they have been used in Cao-Wang-Wu \cite[Theorem~1.1]{CaoWangWu2013} among others much more recently. Our setup is inspired by Kleinbock-Weiss \cite[Section~2.4]{KleinbockWeiss2010}. 

%Denote the \textit{diameter} of a non-empty set $A\subseteq [0,1]$ by $|A| := \sup\{ |x - y|:x,y\in A\}$. %For two non-empty sets $A,B\subseteq [0,1]$, we consider
%\[d(A,B):=\inf\left\{ |a-b|:a\in A, b\in B\right\}.\]
Denote the diameter of a non-empty set $A\subseteq [0,1]$ by $|A|$. Let $\{\calA_n:n\in\N_0\}$ be a collection of finite families of pairwise disjoint compact subintervals of $[0,1]$ with a positive diameter such that $\calA_0$ has exactly one element and the following properties hold: 
\begin{itemize}
\item[•] For all $n\in\N$ and $B\in \calA_n$, there is some $A\in \calA_{n-1}$ such that $B\subseteq A$.
\item[•] For all $n\in\N$ and $A\in \calA_{n-1}$, there are at least two sets $B\in \calA_n$ such that $B\subseteq A$.
\item[•] We have $\delta_n:=\max\{|A|:A\in\calA_n\}\to 0$ as $n\to\infty$.
\end{itemize}
For $n\in\N$ and $A\in \calA_n$, write $\Des_n(A):=\{B\in \calA_{n+1}: B\subseteq A\}$. Under additional considerations, we provide a lower estimate for the Hausdorff dimension of the set
\[
\mathbf{A}_{\infty}
:=
\bigcap_{n=0}^{\infty} \bigcup_{A\in \calA_n} A.
\]
\begin{lemma}\label{LemHD:LowerBound}
Assume there is some $\varepsilon>0$ such that for all $n\in \N$, $A\in \calA_n$, and every $Y,Z\in \Des_{n}(A)$ with $Y\neq Z$ we have
\begin{equation}\label{Eq:LemHD:LowerBound:01}
\min\left\{ |y-z|: y\in\bigcup_{Y'\in \Des_{n+1}(Y)} Y', z \in  \bigcup_{Z'\in \Des_{n+1}(Z)} Z'\right\}
\geq |A|^{1+\varepsilon}.
\end{equation}
%\[d\left(\bigcup_{Y'\in \Des(Y)} Y',\bigcup_{Z'\in \Des(Z)} Z'\right)\geq |A|^{1+\varepsilon}.\]

If $s>0$ satisfies
\begin{equation}\label{Eq:LemHD:LowerBound:02}
\sum_{B\in \Des_{n}(A)} |B|^{s(1+\varepsilon)} \geq |A|^{s(1+\varepsilon)}
\quad\text{ for all } \quad
A\in \bigcup_{n\in\N}\calA_n,
\end{equation}
then $\hdim \mathbf{A}_{\infty}\geq s$.
\end{lemma}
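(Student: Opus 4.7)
My plan is to dispense with the mass distribution principle (in line with the paper's stated philosophy) and to derive the bound directly, by estimating the $s$-dimensional Hausdorff measure of $\mathbf{A}_{\infty}$ from an arbitrary small-diameter cover. The target is to show $\HH^{s}(\mathbf{A}_{\infty}) \geq |A_0|^{s(1+\varepsilon)} > 0$, where $A_0$ is the unique element of $\calA_0$; this immediately yields $\hdim \mathbf{A}_{\infty} \geq s$.

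The argument will proceed in two steps. First, I will attach to each member $U$ of a given open cover of $\mathbf{A}_{\infty}$ a canonical ancestor $A(U)\in\bigcup_{n\geq 0}\calA_n$. The ``at least two children'' and $\delta_n\to 0$ hypotheses force $\mathbf{A}_{\infty}$ to be perfect, so any open $U$ meeting $\mathbf{A}_{\infty}$ contains infinitely many of its points; hence there is a maximal level $k(U)$ at which $U\cap \mathbf{A}_{\infty}$ still lies inside a single element of $\calA_{k(U)}$, which I take as $A(U)$. Maximality of $k(U)$ ensures that $U\cap\mathbf{A}_{\infty}$ meets at least two distinct children $Y,Z\in\Des_{k(U)}(A(U))$, and each of those intersection points automatically lies in some grandchild of its parent, so the separation hypothesis \eqref{Eq:LemHD:LowerBound:01} forces
\[
|U|\geq |A(U)|^{1+\varepsilon},\qquad\text{hence}\qquad |U|^{s}\geq |A(U)|^{s(1+\varepsilon)}.
\]
Second, I will show that any finite subcollection $\calB\subseteq\bigcup_{n\geq 0}\calA_n$ whose union contains $\mathbf{A}_{\infty}$ satisfies $\sum_{A\in\calB}|A|^{s(1+\varepsilon)}\geq |A_0|^{s(1+\varepsilon)}$. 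Replacing $\calB$ by its antichain core (removing any element strictly contained in another) can only decrease the sum, so it suffices to treat antichains. I then iteratively ``collapse'' the antichain from the bottom up: at each stage I locate a parent $P$ all of whose children lie in the current collection (such a $P$ exists whenever the collection differs from $\{A_0\}$, by inspecting a deepest element and its parent and using the antichain and pairwise-disjointness properties) and replace those children by $\{P\}$. By \eqref{Eq:LemHD:LowerBound:02} each such collapse weakly decreases the $s(1+\varepsilon)$-sum, and iteration terminates at $\{A_0\}$. Applying Step 1 to each $U_i$ in the cover and Step 2 to the collection $\{A(U_i)\}$, which covers $\mathbf{A}_{\infty}$ since $U_i\cap\mathbf{A}_{\infty}\subseteq A(U_i)$, yields $\sum_i |U_i|^{s}\geq |A_0|^{s(1+\varepsilon)}$ for every sufficiently fine open cover, and letting the diameter bound tend to zero concludes the argument.

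The main obstacle I anticipate is the bookkeeping in Step 1: one is guaranteed only two points of $U\cap\mathbf{A}_{\infty}$ in distinct \emph{children} $Y,Z$ of $A(U)$, whereas \eqref{Eq:LemHD:LowerBound:01} is phrased as a separation between unions of \emph{grandchildren}. The resolution is the observation that every point of $\mathbf{A}_{\infty}\cap Y$ automatically belongs to some $Y'\in\Des_{k(U)+1}(Y)$ (and analogously for $Z$), so \eqref{Eq:LemHD:LowerBound:01} always fires. Routine technicalities such as replacing an arbitrary cover by an open one with only a slight increase in total $s$-sum, passing to a finite subcover by compactness of $\mathbf{A}_{\infty}$, and discarding degenerate cover sets of zero diameter, should pose no real difficulty.
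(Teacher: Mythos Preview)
Your proposal is correct and follows essentially the same route as the paper: attach to each cover element a canonical ancestor at the deepest level where the cover element sits inside a single box, use the separation hypothesis \eqref{Eq:LemHD:LowerBound:01} to get $|U|\geq |A(U)|^{1+\varepsilon}$, then pass to an antichain of ancestors and iteratively collapse siblings into their parent via \eqref{Eq:LemHD:LowerBound:02} until reaching $\{A_0\}$. The only cosmetic difference is that you define $k(U)$ via $U\cap\mathbf{A}_\infty$ rather than via $U\cap A$ for $A\in\calA_n$ as the paper does; your formulation makes the ``grandchild'' step slightly cleaner, since points of $\mathbf{A}_\infty$ automatically lie in elements of every level.
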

\begin{proof}
Take $\delta>0$ and let $\mathcal{G}$ be a $\delta$-covering of $\mathbf{A}_{\infty}$. Without loss of generality, we may assume $\mathcal{G}$ is finite, consists of open intervals (see, for example, \cite[Section 3.4]{falconer_book2014}), and every element in $\mathcal{G}$ intersects $\mathbf{A}_{\infty}$. For each $G\in \mathcal{G}$ define
\[
n(G)
:=
\max\left\{ n\in\N_0: \#\{ A\in\calA_n: G\cap A\neq \origemptyset \}=1 \right\}.
\]
Let us explain why $n(G)$ is well-defined. First, writing $\calA_0=\{K_0\}$, we have $G\cap K_0\neq \origemptyset$. Second, take $x\in G\cap \mathbf{A}_{\infty}$ and $r>0$ such that $(x-r,x+r)\subseteq G$. Since $\delta_n\to 0$ as $n\to\infty$, for large $n\in\N$, if $A_n\in\calA_n$ contains $x$, then $A_n\subseteq G$, and thus $B\subseteq G$ for all $B\in\Des_n(A_n)$. Therefore, $n(G)$ is the maximum of a non-empty finite set of integers. Let $A(G)$ be the unique compact interval for which the maximum in the definition of $n(G)$ is attained. Then there are two different sets $Y,Z\in \Des_{n(G)}(A(G))$ intersecting $G$. Pick $Y'\in \Des_{n(G)+1}(Y)$ and $Z'\in \Des_{n(G)+1}(Z)$ intersecting $G$. Then,  \eqref{Eq:LemHD:LowerBound:02} yields
\[
|G|
\geq
\min\{|y-z|:y\in Y',z\in Z'\}
\geq 
|A(G)|^{1+\varepsilon}.
\]
% \[
% n(G)
% :=
% \max\left\{ n\in\N: \text{ there exists } A \in \calA_n \text{ such that } \mathbf{A}_{\infty}\cap G\subseteq A \right\}.
% \]
% Clearly, $n(G)$ exists since $\mathbf{A}_{\infty}\cap G$ is contained in the single element of $\calA_0$ and $\delta_n\to 0$ as $n\to \infty$. Also, the set $A(G)$ witnessing the definition of $n(G)$ is unique and there are two different sets $Y,Z\in \Des_{n(G)}(A(G))$ intersecting $\mathbf{A}_{\infty}\cap G$. Pick $Y'\in \Des_{n(G)+1}(Y)$ and $Z'\in \Des_{n(G)+1}(Z)$ intersecting $\mathbf{A}_{\infty}\cap G$. Then,  \eqref{Eq:LemHD:LowerBound:02} yields
% \[
% |G|
% \geq
% |\mathbf{A}_{\infty}\cap G|
% \geq 
% \min\{|y-z|:y\in Y',z\in Z'\}
% \geq 
% |A(G)|^{1+\varepsilon}.
% \]
As a consequence, we have 
\[
\sum_{G\in \mathcal{G}} |A(G)|^{s(1+\varepsilon)}
\leq
\sum_{G\in \mathcal{G}} |G|^{s}.
\]
Clearly, $\mathcal{G}_1:=\{A(G):G\in \mathcal{G}\}$ also covers $\mathbf{A}_{\infty}$. Moreover, since every two members in $\mathcal{G}_1$ are either disjoint or one is contained in the other, there is a subcover $\mathcal{G}_2\subseteq \mathcal{G}_1$ of $\mathbf{A}_{\infty}$ consisting of pairwise disjoint sets. Let $N:=\max\{ n\in \N: \mathcal{G}_2\cap \calA_n \neq \origemptyset\}$. Pick $B\in \mathcal{G}_2\cap \calA_N$ and $A\in \calA_{N-1}$ such that $B\subseteq A$. Since the members of $\mathcal{G}_2$ are pairwise disjoint, $\mathcal{G}_2$ covers $\bfA_{\infty}$, and we cannot go any level deeper than $N$, we must have $\Des_{N-1}(A)\subseteq \mathcal{G}_2$. Hence, if $\mathcal{G}_3$ is the cover obtained from $\mathcal{G}_2$ by replacing the sets in $\Des_{N-1}(A)$ with $A$, we deduce from \eqref{Eq:LemHD:LowerBound:02} that
\[
\sum_{C\in \mathcal{G}_3} |C|^{s(1+\varepsilon)}
\leq 
\sum_{C\in \mathcal{G}_2} |C|^{s(1+\varepsilon)}.
\]
After finitely many replacements,  we arrive at $\calA_0=\{K_0\}$ and thus
\[
0
<
|K_0|^{s(1+\varepsilon)}
\leq 
\sum_{G\in \mathcal{G}} |G|^{s}.
\]
Therefore, we obtain $\hdim\mathbf{A}_{\infty}\geq s$.
\end{proof}

\section{Proofs of the main results}
In this section, we prove Theorem~\ref{thm:onedigit} and Theorem~\ref{thm:product}.
%In this section we complete the proof of Theorem~\ref{thm:onedigit} and provide a proof of Theorem~\ref{thm:product}.
%\textcolor{blue}{(Add?) Finally we present ^several examples of $\boldsymbol{\xi}$-regular iIFS to which our main results apply.}
\subsection{Proof of Theorem~\ref{thm:onedigit}}
\label{Section:ProofOfOneDigit}
We separate the proof into two parts: the upper bound and the lower bound. The upper bound trivially follows from Theorem~1.1 in \cite{CaoWangWu2013}, because $S(\bff,D,\varphi) \subseteq \E(\bff,D) $, and so
\[
\hdim S(\bff,D,\varphi) \leq \hdim \E(\bff,D) = s_0(D;\boldsymbol{\xi}).
\]
Now we show the lower bound of $\hdim S(\bff,D,\varphi)$, that is
%Now we want to show the lower bound of the Hausdorff dimension of $S(\bff,D,\varphi)$, so we want to show
\[
\hdim S(\bff,D,\varphi) \geq s_0(D;\boldsymbol{\xi}).
\]
For this purpose, we will construct a suitable Cantor subset $K(\bff,D,\bfl,\bfu)\subseteq S(\bff,D,\varphi)$, where $\bfl=(l_n)_{n\geq1}$ and $\bfu=(u_n)_{n\geq1}$ are two slowly increasing sequences of natural numbers to be defined later. 

There is nothing to prove if $s_0(D;\boldsymbol{\xi}) =0$, as we always have the trivial lower bound
$\hdim S(\bff,D,\varphi) \geq 0.$
Hence, we further assume that $s_0(D;\boldsymbol{\xi})>0$. Given $n\in\N$ and $\veca=(a_1,\ldots, a_n)\in \N^n$, we consider 
\[
I_n(\veca)
:=
\{ x\in \Lambda: a_k(x)=a_k \text{ for all }k\in\{1,\ldots, n\}\}.
\]
We refer to sets of the form $I_n(\veca)$ as \textit{fundamental intervals}. For readability, let us additionally assume $m=1$ in the Contraction Property. Then, for all $a\in\N$ and all $x\in [0,1]$ we have $|f_a'(x)|\leq\rho$, so
\[
|I_n(\veca)|\leq \rho^n 
\quad 
\text{ for all } n\in \N
 \text{ and all } \veca\in \N^n.
\]
At the end of the proof, we will explain how to proceed when $m\geq2$. 

Let $s,\varepsilon>0$ satisfy $s(1+\varepsilon)^2< s_0(D;\boldsymbol{\xi})$ and let $c_1=c_1(\varepsilon)>0$ and $c_2=c_2(\varepsilon)>0$ be as in the definition of the $\boldsymbol{\xi}$-regularity. Let $\kappa\geq1$ be the constant from the Bounded Distortion property. Write $D=\{d_1<d_2<d_3<\cdots\}$.

We now construct two sequences $\bfl$ and $\bfu$ in $\N$. First, in the view of $s(1+\varepsilon)< s_0(D,\boldsymbol{\xi})$, we may take a strictly increasing sequence $(r_k)_{k\geq 1}$ in $\N$ such that 
\begin{equation}\label{Eq:DefBn:00}
r_k - k\geq 4\quad\text{ and }\quad
\sum_{n=k}^{r_k} \left(\frac{ \kappa^{-1}  c_1}{\xi_{d_n}^{1+\varepsilon}}\right)^{s(1+\varepsilon)}\geq 3 \quad\text{ for all } k\in\N.
\end{equation}
Let $(N_j)_{j\geq 1}$ be a strictly increasing sequence of natural numbers such that for every $j\in\N$ we have
\begin{align}
\varphi(n) &\geq d_{r_j} + 1 \quad \text{ for all } n\in\N_{\geq N_j},\;\text{ and } \label{Eq:DefBn:01} \\
\max &\left\{ (\xi_{d_i}\xi_{d_k})^{1+ \varepsilon}: j \leq i \leq r_{j+1},\ j \leq k \leq r_{j+2}\right\} \leq  \kappa^{-1}  c_1^2\rho^{-\varepsilon N_j}. \label{Eq:DefBn:02}
\end{align}
% Let $\bfl=(l_n)_{n\geq 1}$ and $\bfu=(u_n)_{n\geq 1}$ be given by
% \[
% l_n:=
% \begin{cases}
% 1, & 1\leq n< N_1, \\
% k, & N_k\leq n< N_{k+1},
% \end{cases} 
% \quad
% u_n:=
% \begin{cases}
% 1, & 1\leq n< N_1, \\
% r_k, & N_k\leq n< N_{k+1},
% \end{cases} 
% \]
Let $\bfl=(l_n)_{n\geq 1}$ and $\bfu=(u_n)_{n\geq 1}$ be defined as follows: for each $n\in\N$, take $j\in\N$ such that 
\[
N_j-N_1+1\leq n\leq N_{j+1}-N_1
\]
and put
\[
l_n:= j 
\quad\text{ and }\quad
u_n:= r_j. 
\]
Moreover, for each $n\in\N$ define the set
\[
D_n:= \{d_i\in D\colon l_{n}\leq i< u_{n}\}.
\]
We now describe the layers of a Cantor set contained in $S(\bff,D,\varphi)$. First, consider
\[
K_0:=\overline{I}_{N_1-1}(d_1,\ldots, d_1)
\]
Here and in the following, the bar over $I$ denotes its closure. For all $n\in\N$ and $(b_1,\ldots, b_n)\in\N^n$ define
\[
K_n(b_1,\ldots, b_n)
:=
\overline{I}_{N_1-1+n}(d_1,\ldots, d_1, b_1,\ldots, b_n).
\]
Define $B_1:=D_1$. For each $b\in B_1$, let $L(b)\in D_2$ (resp. $R(b)\in D_2$) be such that $K_2(b,L(b))$ (resp. $K_2(b,R(b))$) is the leftmost (resp. rightmost) interval in $\{K_2(b,b_2):b_2\in D_2\}$ and write
\[
B_2(b):=\left\{(b,b_2):b_2\in D_1\setminus\{L(b),R(b)\}\right\}.
\]
Define
\[
B_2:=\bigcup_{b_1\in B_1} B_2(b_1).
\]
In what follows, for any $j,k\in\N$ and any $\veca=(a_1,\ldots, a_j)\in\N^j$ and $\vecb=(b_1,\ldots, b_k)\in\N^k$, we write 
\[
\veca\vecb:=(a_1,\ldots, a_j,b_1,\ldots,b_k).
\]
Let us assume that for some $n\in\N$ we have defined a non-empty finite set $B_n\subseteq D_1\times \cdots\times D_n$. For each $\vecb \in B_n$, let $L(\vecb)\in D_{n+1}$ (resp. $R(\vecb)\in D_{n+1}$) be such that $K_{n+1}(\vecb L(\vecb))$ (resp. $K_{n+1}(\vecb R(\vecb))$) is the leftmost (resp. rightmost) interval in $\left\{K_{n+1}(\vecb b):b\in D_{n+1} \right\}$ and write
\[
B_{n+1}(\vecb)
:=
\left\{ \vecb b:b\in D_{n+1}\setminus \{L(\vecb),R(\vecb)\}\right\}.
\]
Define
\[
B_{n+1}
:=
\bigcup_{\vecb\in B_n} B_{n+1}(\vecb).
\]
Note that each set $B_n$ depends on $D_n$, which in turn is defined in terms of $\bfl$ and $\bfu$, and hence of $(N_j)_{j\geq 1}$. We will apply Lemma~\ref{LemHD:LowerBound} to estimate the Hausdorff dimension of
\[
K
=
K(\bff,D,\bfl,\bfu)
:=
\bigcap_{n\in\N}\bigcup_{\vecb\in B_n} K_n(\vecb)
\subseteq
S(\bff,D,\varphi).
\]
Let us check that $K$ is indeed a subset of $S(\bff,D,\varphi)$. Pick any $x\in K$. It suffices to check the growth and the limit conditions, for the definition of $K$ implies that every digit of $x$ belongs to $D$. Take $n\in\N$. If $1\leq n\leq N_1-1$, then
\[
a_n(x) = d_1 \leq \min_{k\in\N} \varphi(k) \leq \varphi(n).
\]
If $n\geq N_1$, let $j\in\N$ be the single number for which $N_j\leq n \leq N_{j+1}-1$. Then we have $N_j - N_1+1\leq n-N_1+1 \leq N_{j+1}-N_1$, and so $l_{n-N_1+1}=j$, $u_{n-N_1+1}=r_j$. This yields
\[
a_n(x) \in D_{n-N_1+1}= \{d_j< \cdots < d_{r_j}\}. 
\]
As a result, by \eqref{Eq:DefBn:01} and the choice of $j$ we obtain
\[
d_j \leq a_n(x) \leq d_{r_j} \leq \varphi(n).
\]
Finally, as $n\to\infty$ we have $j\to \infty$, $d_j\to\infty$ and so $a_n(x)\to\infty$. We conclude $x\in S(\bff,D,\varphi)$.

% {\color{blue} ----------------- Probably we'll erase this.--------------------}
% We now describe the layers of a Cantor set contained in $S(\bff,D,\varphi)$. First, consider
% \[
% K_0:=\overline{I}_{N_1-1}(d_1,\ldots, d_1)
% \]
% Here and in the following, the bar above $I$ denotes the closure. For all $n\in\N$ and $(a_1,\ldots, a_n)\in\N^n$ define
% \[
% K_n(a_1,\ldots, a_n)
% :=
% \overline{I}_{N_1-1+n}(d_1,\ldots, d_1, a_1,\ldots, a_n).
% \]
% For $n\in\N_0$, put %\textcolor{red}{(remove, it is hard to see the triple subscripts) put $D_n:=D\cap [d_{l_{N_1}-1+n} , d_{u_{N_1}-1+n})$.}
% \[
% D_n:= \{d_i\in D\colon l_{N_1-1+n}\leq i\leq  u_{N_1-1+n}\}.
% \]
% {\color{blue} ----------------- Probably we'll erase this.--------------------}

% {\color{blue}
% Observe that, given $j\in\N$, for any $n\in\N$ such that $N_j-N_1 +1 \leq n \leq N_{j+1}-N_1$ we have 
% \[
% N_j \leq n + N_1 - 1 \leq N_{j+1} - 1,
% \]
% so $l_n= j$ and $u_n=r_j$. This means that $D_{N_j - N_1+1} = D_{N_j - N_1+2} = \ldots =  D_{N_{j+1} - N_1}$. 
% }
%\textcolor{red}{The definition of $D_n$ is not easy because the subscripts are shifted unnecessarily. Why not the following?
%\[
%D_n:= \{d_i\in D\colon l_{n}\leq i< u_{n}\},
%\]
%where
%\[l_n=k\ (N_k-N_1+1\leq n\leq N_{k+1}-N_1), k=1,2,\ldots\]
%\[u_n=r_k\ (N_k-N_1+1\leq n\leq N_{k+1}-N_1), k=1,2,\ldots\]}

%\textcolor{red}{Does $N_j$, $j\geq2$ really appear in the definition of this set? I only see $N_1$.}
Define $\calA_0:=\{K_0\}$ and $\calA_n:=\{ K_n(\vecb):\vecb\in B_n\}$ for all $n\in\N$. Let us verify the assumptions on $\{\calA_n:n\in\N_0\}$ from Section \ref{Section:Preliminaries}. Take $n\in\N$. Clearly, $\calA_n$ consists of finitely many pairwise disjoint compact intervals with positive diameter. The three points assumed on $\calA_n$ also hold:
\begin{itemize}
\item[•] The first point follows from the construction of $B_n\subseteq D_1\times\cdots\times D_n$.
\item[•] By $r_n-n\geq 4$, for any $A\in \calA_n$ the set $\Des_{n}(A)$ has at least two elements.
\item[•]Since we have assumed $m=1$ in the Contraction Property, we have $\delta_n\leq \rho^{N_1-1}\rho^n$.
%\textcolor{red}{$d_n$ is an element of $D\subset\N$. Do you mean $|K_n(a_1,\ldots, a_n)|\leq \rho^{N_1-1}\rho^n$?}
%\comgero{Fixed. $d_n$ here was referring to the maximum of diameters, but it was certainly ambiguous. Now it is $\delta_n$.}
\end{itemize}
To check \eqref{Eq:LemHD:LowerBound:01} from Lemma~\ref{LemHD:LowerBound}, take $j\in\N$ such that $N_j-N_1+1\leq n\leq N_{j+1}-N_1$, and $\vecb\in B_n$. First, note that if $b_{n+1} \in D_{n+1}$ is such that $\vecb b_{n+1}\in B_{n+1}$ then for all $b_{n+2} \in D_{n+2}$ we have
\begin{align*}
|K_{n+2}(\vecb b_{n+1} b_{n+2})|
&\geq
|K_{n}(\vecb)|\frac{ \kappa^{-1}  c_1^2}{(\xi_{b_{n+1} }\xi_{b_{n+2} })^{1+\varepsilon}} \\
&\geq
|K_{n}(\vecb)| \rho^{\varepsilon N_j} &&\text{(by \eqref{Eq:DefBn:02})} \\
& \geq
|K_{n}(\vecb)|\rho^{\varepsilon n}\\
&\geq
|K_{n}(\vecb)|^{1+\varepsilon}.
\end{align*}
%\textcolor{red}{How the first inequality is deduced? The mean value theorem gives $\exists\theta_1,\theta_2\in[0,1]$ s.t. $|K_{n+2}(\veca a_{n+1} a_{n+2})|=|(f_{a_1}\circ\cdots \circ f_{a_{n+2}})'(\theta_1)|=|(f_{a_1}\circ\cdots \circ f_{a_{n}})'(f_{a_{n+1}}\circ f_{a_{n+2}}(\theta_1))||(f_{a_{n+1}} \circ f_{a_{n+2}})'(\theta_1)|$ and $|K_{n}(\veca  )|=|(f_{a_1}\circ\cdots \circ f_{a_{n}})'(\theta_2)|$. The first equality gives $|K_{n+2}(\veca a_{n+1} a_{n+2})|\geq|(f_{a_1}\circ\cdots \circ f_{a_{n}})'(f_{a_{n+1}}\circ f_{a_{n+2}}(\theta_1))|\frac{c_1^2}{(\xi_{a_{n+1} }\xi_{a_{n+2} })^{1+\varepsilon}}$. For the desired inequality we want $|(f_{a_1}\circ\cdots \circ f_{a_{n}})'(f_{a_{n+1}}\circ f_{a_{n+2}}(\theta_1))|\geq |K_{n}(\veca  )|=|(f_{a_1}\circ\cdots \circ f_{a_{n}})'(\theta_2)|$, but this is not known ... I think we need to assume that the distortion  of $(f_{a_1}\circ\cdots \circ f_{a_{n}})$ is uniformly bounded.}
Let $b,c\in D_{n+1}$ and $b', c'\in D_{n+2}$ be such that $\vecb b b',\vecb cc' \in B_{n+2}$. Then the construction of $B_{n+2}$ implies that the distance between $K_{n+2}(\vecb b b')$ and $K_{n+2}(\vecb c c')$ is at least the length of one of the removed intervals:
\[
\left\{ K_{n+2}(\vecd): \vecd\in \{\vecb b L(\vecb b), \vecb b R(\vecb b), \vecb c L(\vecb c), \vecb c R(\vecb c) \}\right\}.
\]
This length is larger than or equal to $|K_n(\vecb)|^{1+\varepsilon}$, which shows \eqref{Eq:LemHD:LowerBound:01} from Lemma \ref{LemHD:LowerBound}. 

Condition \eqref{Eq:LemHD:LowerBound:02} holds because the Bounded Distortion Property and  \eqref{Eq:DefBn:00} imply 
\[\begin{split}
\sum_{d\in B_{n+1}(\vecb)} |K_{n+1}(\vecb d)|^{s(1+\varepsilon)} 
&\geq 
|K_{n}(\vecb)|^{s(1+\varepsilon)} \left( \sum_{d\in D_{n+1}} \left( \frac{\kappa^{-1} c_1}{\xi_{d}^{1+\varepsilon}} \right)^{s(1+\varepsilon)} - 2 \right) \\
&\geq |K_{n}(\vecb)|^{s(1+\varepsilon)}.
\end{split}\]
%where for the second inequality we used \eqref{Eq:DefBn:00}.
%\textcolor{red}{The same problem as above, in deducing the first inequality.}
%Therefore, $\hdim S(\bff,D,\varphi)\geq  \hdim K\geq s(1+\varepsilon)^2$ by Lemma~\ref{LemHD:LowerBound}.
Therefore, by Lemma~\ref{LemHD:LowerBound} we obtain $\hdim S(\bff,D,\varphi)\geq  \hdim K\geq s $ as required.
%\textcolor{red}{why $\hdim K\geq s(1+\varepsilon)^2$? I think $\hdim K\geq s$ by \eqref{Eq:DefBn:00}.} {\color{blue} \textbf{G.} Fixed}
%\textcolor{red}{Why $K\subset S(\bff,D,\varphi)$?} {\color{blue} \textbf{G.} The proof of this statement is in lines 205--211 of the pdf.}

When $m\geq 2$, we add $m$ digits in each layer of our Cantor set construction and adjust the parameters accordingly. More precisely, the sequence $(r_k)_{k\geq 1}$ remains unchanged. 
%for $k\in \N$, we replace \eqref{Eq:DefBn:00} with 
%\[
%\sum_{n=mk}^{r_k} \left(\frac{c_1}{\xi_{d_n}^{1+\varepsilon}}\right)^{s(1+\varepsilon)}\geq 3
%\quad\text{ and }\quad
%r_k - km\geq 4.
%\]
For $(N_j)_{j\geq 1}$, instead of \eqref{Eq:DefBn:01} we assume that for all $j\in\N$ we have $\varphi(n)\geq d_{r_j}+1$ when $n\geq mN_j$  and instead of 
%rather than 
\eqref{Eq:DefBn:02} we impose
\[
\max \left\{ \prod_{k=1}^{2m} \xi_{d_{i_k}}^{1+ \varepsilon}: j \leq i_1 \leq r_{j+1},\ j \leq i_k \leq r_{j+2} \text{ for } 2\leq k \leq  2m \right\} 
\leq 
 \kappa^{-1}c_1^{2m}\rho^{-\varepsilon N_j}.
\]
In order to define the sequences $\bfl=(l_n)_{n\geq 1}$ and $\bfu=(u_n)_{n\geq 1}$, for each $n\in\N$ take $j\in\N$ such that 
\[
m(N_j - N_1) + 1 \leq n \leq m(N_{j+1} - N_1)
\]
and put $l_n:=j$ and $u_n:=r_j$. We start with $K_0:=\overline{I}_{mN_1-1}(d_1,\ldots, d_1)$. Next, for each $\vecb\in B_1:=D_1\times\cdots\times D_m$ we let $L(\vecb)$ and $R(\vecb)$ be the vectors in $D_{m+1}\times \cdots \times D_{2m}$ such that $K_{2m}(\veca L(\vecb))$ and $K_{2m}(\vecb R(\vecb))$ are, respectively, the leftmost and rightmost intervals in the collection $\left\{K_{2m}(\vecb \vecc):\vecc \in D_{m+1}\times \cdots \times D_{2m}\right\}$. Afterwards, we write 
\[
B_2(\vecb):= \left\{ \vecb \vecc: \vecc\in D_{m+1}\times \cdots \times D_{2m}\} \setminus \{ L(\vecb),R(\vecb)\right\}. 
\]
Finally, we define 
\[
B_2:= \bigcup_{\veca\in B_1} B_2(\vecb).
\]
The rest of the argument now closely follows the case $m=1$. 
This completes the proof of Theorem~\ref{thm:onedigit}. \qed

\subsection{ Proof of 
Theorem~\ref{thm:product}}
The proof is once again split into the upper bound and the lower bound.
The upper bound trivially follows from the definition of the set and 
%\textcolor{red}{(remove) Theorem~\ref{thm:CWW}} 
%\textcolor{blue}{(replace) 
\cite[Theorem~1.1]{CaoWangWu2013}, as we always have
\[
P_m(\bff,D,\varphi, \overline{t}, \overline{g}) \subseteq \E(\bff,D),
\]
and so
\[
\hdim P_m(\bff,D,\varphi, \overline{t}, \overline{g}) \leq \hdim \E(\bff,D)=s_0(D;\boldsymbol{\xi}).
\]

As for the lower bound, we will step-by-step construct subsets of the original set $P_m(\bff,D,\varphi, \overline{t}, \overline{g})$ and eventually come to a subset of the form
\[
P_m(\bff,D,\varphi, \overline{t}, \overline{g}) \supseteq S(\bff,D,\zeta)
\]
for a particular choice of a function $\zeta:\N\to[\min D,\infty).$  By the result of Theorem~\ref{thm:onedigit} this will imply Theorem~\ref{thm:product}.

First, without loss of generality we can assume that $\varphi$ is strictly increasing, as for any function $\varphi:\N\to [\min D,\infty)$ such that $\varphi(n)\to\infty$ as $n\to\infty$ we can always find another function $\phi$ that is strictly increasing and satisfies $\phi(n) < \varphi(n)$ for any $n\in\N$.
This means we can always pass, if needed, to a subset
$$
P_m(\bff,D,\varphi, \overline{t}, \overline{g})  \supseteq P_m(\bff,D,\phi, \overline{t}, \overline{g}) 
$$
with $\phi$ being a strictly increasing function. 

In what follows we assume that $\varphi$ is strictly increasing. It is easy to see that if one has 
\[
a_{g_i(n)}(x) \leq \varphi(n)^{\frac{1}{mt_i}} \text{ for each } 1\leq i \leq m, \text{ for all } n\in\N,
\]
then 
\[
 a_{g_1(n)}^{t_1}(x)\cdots a_{g_m(n)}^{t_{m}}(x)\leq \varphi(n) \text{ for all } n\in\N.
\]
Hence the first subset we consider is
\[\begin{split}
P_m(\bff,D,\varphi, \overline{t}, \overline{g}) & \supseteq  \left\{ x\in \E(\bff,D) : a_{g_i(n)}(x) \leq \varphi(n)^{\frac{1}{mt_i}}, 1\leq i \leq m, \text{ for all } n\in\N \right\}\\
&=: G_1.
\end{split}\]

Now let  $I_i :=g_i(\N)$.
For each $1\leq i \leq m$ and for each $k\in I_i$, 
% \textcolor{red}{(remove) consider a preimage set $g_i^{-1}(k) := \{ n\in\N : g_i(n) = k \}$ of the number $k$ under the function $g_i.$ 
% For $k\in I_i$, let 
% \[
% M_{i,k} := \min \{ r: r\in g_i^{-1}(k) \}. \] }
 let \[M_{i,k} := \min \{ r\in\mathbb N\colon g_i(r)=k \}.\]
Note that $M_{i,k}$ is well-defined as the minimum in the definition is taken over a non-empty set of natural numbers. The second subset we consider is
\[
G_1 \supseteq \left\{ x\in \E(\bff,D) : a_{r_i}(x) \leq \varphi(M_{i,r_i})^{\frac{1}{mt_i}}, \text{ for all } r_i\in I_i, 1\leq i \leq m  \right\}=: G_2.
\]

For each $1 \leq i \leq m$, define a set $C_i = \N \backslash I_i$ 
%- a complement of the image of the function $g_i$. 
For any $k\in C_i$, let 
\begin{comment}
\begin{eqnarray}
M_i(k) : = \notag
\left\{ \begin{split} 
&\min_{j\in I_i} \{ M_{i,j}: M_{i,j} > k \}, && \quad \text{ if }  \{ M_{i,j},j\in I_i: M_{i,j} > k \}\neq\origemptyset, \\ 
& \max_{j\in I_i} \{ M_{i,j}: M_{i,j} < k \} +k, && \quad \text{ if }  \{ M_{i,j},j\in I_i: M_{i,j} > k \}=\origemptyset.
\end{split}
\right.
\end{eqnarray}
\end{comment}
\begin{eqnarray}
R_{i,k} : = \notag
\left\{ \begin{split} 
&\min\{ \{M_{i,j}\}_{j\in I_i}\cap\mathbb N_{ > k} \}, && \quad \text{ if } \{M_{i,j}\}_{j\in I_i}\cap\mathbb N_{ > k}\neq\origemptyset, \\ 
& \max\{ \{M_{i,j}\}_{j\in I_i}\cap\mathbb N_{ < k} \} +k, && \quad \text{ otherwise.}
\end{split}
\right.
\end{eqnarray}
Note that $R_{i,k}$ is also well-defined as at least one of the sets $\{ M_{i,j}\}_{j\in I_i}\cap\mathbb N_{ > k} $ or $\{ M_{i,j}\}_{j\in I_i}\cap\mathbb N_{ < k}$ is non-empty, because $I_i$ is always non-empty.

For each $1 \leq i \leq m$, define a function $\zeta_i\colon \N \to \N$ by
\begin{eqnarray}
\zeta_i(n) := \notag
\left\{ \begin{split} 
&\varphi(M_{i,n})^{\frac{1}{mt_i}}, && \quad n\in I_i,  \\ 
& \varphi(R_{i,n})^{\frac{1}{mt_i}}, && \quad n\in C_i.
\end{split}
\right.
\end{eqnarray}
It is easy to see that each $\zeta_i(n)$ is non-decreasing and tending to infinity as $n\to\infty.$ Indeed, for each $1\leq i \leq m$ either $I_i$ or $C_i$ is infinite. If $I_i$ is infinite, then $\{M_{i,k}\}_{k\in I_i}$ forms a non-decreasing sequence, tending to infinity as $k\to\infty$. If $C_i$ is infinite, then $\{R_{i,k}\}_{k\in C_i}$ forms a non-decreasing sequence, tending to infinity as $k\to\infty$. As $\varphi$ is a strictly increasing function, we get the claim.

Finally, define a function $\zeta\colon \mathbb N\to[\min D,\infty)$ by
\[
\zeta(n) := \min \{ \zeta_1(n),\ldots, \zeta_m(n) \}.
\]
For any $1\leq i \leq m$ and for any $n\in\N$, we have $\zeta(n) \leq \zeta_i(n)$, and so we can pass to the final subset
\[
G_2 \supseteq \left\{ x\in \E(\bff,D) : a_{n}(x) \leq \zeta(n), \text{ for all } n\in\N  \right\}=: G_3=S(\bff,D,\zeta).
\]
As $\zeta(n)$ is non-decreasing and tending to infinity as $n\to\infty$, by Theorem~\ref{thm:onedigit} we get the desired conclusion. This completes the proof of Theorem~\ref{thm:product}.\qed

\subsection*{Acknowledgments} GGR, MH, and NS were supported by the Australian Research Council discovery project grant number 200100994.
HT was supported by the JSPS KAKENHI 
19K21835, 20H01811. 

\begin{comment}
\textcolor{red}{I cannot usebibliog.bib.}
[Cus90]   Cusick, T.W.: Hausdorff dimension of sets of continued fractions.
  Quart. J. Math. Oxford, {\bf 41}, 277--286 (1990) 

  [Hir70] Hirst, K.E.:
A problem in the fractional dimension
theory of continued fractions.
Quart. J. Math. Oxford, {\bf 21}, 29--35 (1970) 

[JaeKes10]  Jaerisch, J.,  Kesseb\"ohmer, M.: 
 The arithmetic-geometric scaling spectrum for continued fractions.
 Ark. Mat. {\bf 48}, 335--360 (2010) 

[Luc97]
$\L$uczak, T.: On the fractional dimension of sets of continued fractions. Mathematika {\bf 44},
50--53 (1997) 

[Moo92] Moorthy, C.G.: A problem of Good on Hausdorff dimension. Mathematika {\bf 39}, 244--246 (1992) 

[Ram85]  Ramharter, G.: Eine Bemerkung \"uber gewisse Nullmengen von Kettenbr\"uchen.
Ann. Univ. Sci. Budapest. E\"otv\"os Sect. Math.  {\bf 28}, 11--15 (1985) 

[RU16] Rempe-Gillen, L.,  %Urba$\acute{\rm n}$ski
Urba\'nski, M.: Non-autonomous conformal iterated function systems and Moran-set constructions, Trans. Amer. Math. Soc. {\bf 368}, %(3)
1979--2017 (2016)

 [NakTak] Nakajima, Y., Takahasi, H.: Hausdorff dimension of sets with restricted, slowly growing partial quotients in the semi-regular continued fraction, arXiv:2209.08318

 [WanWu08II] Wang, B.-W., Wu, J.: 
A remark on continued fractions with sequences of partial quotients.
J. Number Theory {\bf 128}, 2394--2397 (2008)

\end{comment}

\bibliographystyle{abbrv}
\bibliography{bibliog}

\begin{thebibliography}{10}

\bibitem{BarrionuevoBurtonDajaniKraaikamp1996}
J.~Barrionuevo, R.~M. Burton, K.~Dajani, and C.~Kraaikamp.
\newblock Ergodic properties of generalized {L}\"{u}roth series.
\newblock {\em Acta Arith.}, 74(4):311--327, 1996.

\bibitem{CaoWangWu2013}
C.-Y. Cao, B.-W. Wang, and J.~Wu.
\newblock The growth speed of digits in infinite iterated function systems.
\newblock {\em Studia Math.}, 217(2):139--158, 2013.

\bibitem{Cus90}
T.~W. Cusick.
\newblock Hausdorff dimension of sets of continued fractions.
\newblock {\em Quart. J. Math. Oxford Ser. (2)}, 41(163):277--286, 1990.

\bibitem{falconer_book2014}
K.~Falconer.
\newblock {\em Fractal geometry}.
\newblock John Wiley \& Sons, Ltd., Chichester, third edition, 2014.
\newblock Mathematical foundations and applications.

\bibitem{FLWW}
A.-H. Fan, L.-M. Liao, B.-W. Wang, and J.~Wu.
\newblock On {K}hintchine exponents and {L}yapunov exponents of continued
  fractions.
\newblock {\em Ergodic Theory Dynam. Systems}, 29(1):73--109, 2009.

\bibitem{Good1941}
I.~J. Good.
\newblock The fractional dimensional theory of continued fractions.
\newblock {\em Proc. Cambridge Philos. Soc.}, 37:199--228, 1941.

\bibitem{Hir70}
K.~E. Hirst.
\newblock A problem in the fractional dimension theory of continued fractions.
\newblock {\em Quart. J. Math. Oxford Ser. (2)}, 21:29--35, 1970.

\bibitem{Hirst1973}
K.~E. Hirst.
\newblock Continued fractions with sequences of partial quotients.
\newblock {\em Proc. Amer. Math. Soc.}, 38:221--227, 1973.

\bibitem{HuangWuXu2020}
L.~Huang, J.~Wu, and J.~Xu.
\newblock Metric properties of the product of consecutive partial quotients in
  continued fractions.
\newblock {\em Israel J. Math.}, 238(2):901--943, 2020.

\bibitem{HKWW}
M.~Hussain, D.~Kleinbock, N.~Wadleigh, and B.-W. Wang.
\newblock Hausdorff measure of sets of {D}irichlet non-improvable numbers.
\newblock {\em Mathematika}, 64(2):502--518, 2018.

\bibitem{HussainShulgaExponential}
M.~Hussain and N.~Shulga.
\newblock Metrical properties of exponentially growing partial quotients.
\newblock {\em arXiv:2309.10529}, preprint 2023.

\bibitem{HussainShulgaFiniteProgressions}
M.~Hussain and N.~Shulga.
\newblock Metrical properties of finite product of partial quotients in
  arithmetic progressions.
\newblock {\em arXiv:2309.00826}, preprint 2023.

\bibitem{Jarnik1}
V.~Jarn\'ik.
\newblock Zur metrischen {T}heorie der diophantischen {A}pproximationen.
\newblock {\em Prace mat. fiz.}, 36:91--106, 1928.

\bibitem{MR2869111}
T.~Jordan and M.~Rams.
\newblock Increasing digit subsystems of infinite iterated function systems.
\newblock {\em Proc. Amer. Math. Soc.}, 140(4):1267--1279, 2012.

\bibitem{KleinbockWadleigh}
D.~Kleinbock and N.~Wadleigh.
\newblock A zero-one law for improvements to {D}irichlet's theorem.
\newblock {\em Proc. Amer. Math. Soc.}, 146(5):1833--1844, 2018.

\bibitem{KleinbockWeiss2010}
D.~Kleinbock and B.~Weiss.
\newblock Modified {S}chmidt games and {D}iophantine approximation with
  weights.
\newblock {\em Adv. Math.}, 223(4):1276--1298, 2010.

\bibitem{Luc97}
T.~$\L$uczak.
\newblock On the fractional dimension of sets of continued fractions.
\newblock {\em Mathematika}, 44(1):50--53, 1997.

\bibitem{Moo92}
C.~G. Moorthy.
\newblock A problem of {G}ood on {H}ausdorff dimension.
\newblock {\em Mathematika}, 39(2):244--246, 1992.

\bibitem{NakajimaTakahasi2023}
Y.~Nakajima and H.~Takahasi.
\newblock The dimension theory of semi-regular continued fractions.
\newblock {\em arXiv:2209.08318}, preprint 2023.

\bibitem{RU16}
L.~Rempe-Gillen and M.~Urba\'{n}ski.
\newblock Non-autonomous conformal iterated function systems and {M}oran-set
  constructions.
\newblock {\em Trans. Amer. Math. Soc.}, 368(3):1979--2017, 2016.

\bibitem{Renyi1952}
A.~R\'{e}nyi.
\newblock Representations for real numbers and their ergodic properties.
\newblock {\em Acta Math. Acad. Sci. Hungar.}, 8:477--493, 1957.

\bibitem{Schwieger1995}
F.~Schweiger.
\newblock {\em Ergodic theory of fibred systems and metric number theory}.
\newblock Oxford Science Publications. The Clarendon Press, Oxford University
  Press, New York, 1995.

\bibitem{MR4607611}
H.~Takahasi.
\newblock Hausdorff dimension of sets with restricted, slowly growing partial
  quotients.
\newblock {\em Proc. Amer. Math. Soc.}, 151(9):3645--3653, 2023.

\bibitem{WangWu08hirst}
B.-W. Wang and J.~Wu.
\newblock A problem of {H}irst on continued fractions with sequences of partial
  quotients.
\newblock {\em Bull. Lond. Math. Soc.}, 40(1):18--22, 2008.

\bibitem{WanWu08II}
J.~Wu.
\newblock A remark on continued fractions with sequences of partial quotients.
\newblock {\em J. Number Theory}, 128(8):2394--2397, 2008.

\end{thebibliography}

\end{document}